\newtheorem{theorem}{Theorem}[section]
\newtheorem{prop}[theorem]{Proposition}
\newtheorem{cor}[theorem]{Corollary}
\newtheorem{lem}[theorem]{Lemma}
\newtheorem{dfn}[theorem]{Definition}
\numberwithin{equation}{section}
\def\RR{\mathbb{R}}
\def\Rn{\mathbb{R}^{n}}
\def\Hndos{\mathcal{H}^{n-2}}
\def\Hntres{\mathcal{H}^{n-3}}
\DeclareMathOperator{\Img}{Image}
\begin{document}
\title{Cut and singular loci up to codimension 3}

\author{Pablo Angulo Ardoy\\
Department of Mathematics\\Universidad Autónoma de Madrid. \and 
Luis Guijarro \\
Department of Mathematics\\Universidad Autónoma de Madrid.\\ICMAT CSIC-UAM-UCM-UC3M}

\date{31 August 2009}

\maketitle

\begin{abstract}
We give a new and detailed description of the structure of cut loci, with direct applications to the singular sets of some Hamilton-Jacobi equations. These sets may be non-triangulable, but a local description at all points except for a set of Hausdorff dimension $n-2$ is well known. We go further in this direction by giving a clasification of all points up to a set of Hausdorff dimension $n-3$.
\end{abstract}

\section{Introduction}

In this paper we improve the current knowledge about the sets known as the \emph{cut locus} in differential geometry and the \emph{singular set} of solutions to static Hamilton-Jacobi equations:
\begin{eqnarray}
H(x,du(x))=1&\quad&x\in M\label{HJequation}\\
u(x)=g(x)&&x\in \partial M\label{HJboundarydata}
\end{eqnarray} 
for $H$ smooth and convex in the second argument and $g$ satisfying a standard \emph{compatibility condition} (see \ref{compatibility condition}).

The solution to the equations above is given by the Lax-Oleinik formula:
\begin{equation}\label{Lax-Oleinik} 
u(p)=\inf_{q\in \partial M}
\left\lbrace 
   d(p,q)+ g(q)
\right\rbrace
\end{equation} 
where $d$ is the distance function of a Finsler metric constructed in $\Omega$ from the hamiltonian function $H$.
Thus, when $g=0$, the solution to the equations is the distance to the boundary, and then the singular set of the solution is the cut locus from the boundary (see \cite{Li Nirenberg}), an object of differential geometry. In section \ref{section: Hamilton-Jacobi} we find a similar relationship when $g\neq 0$.

Our main result is a local description around any point of the cut locus \emph{except for a set of Hausdorff dimension $n-3$} (see Theorem \ref{complete description}).

This structure result was originally motivated by its use in the paper \cite{Nosotros2}.
This application motivated some important decisions. For example, all the proofs apply to the more general \emph{balanced split locus}. 
We show in this paper that cut loci (hence singular sets of solutions to HJ equations) are balanced split loci.
In general, there are many balanced split loci besides the cut locus. In the paper \cite{Nosotros2}, and using the results in this paper, we study and classify all possible balanced split loci.

We believe that our description of the cut locus could also be useful in other contexts. For instance, the study of brownian motion on manifolds is often studied on the complement of the cut locus from a point, and then the results have to be adapted to take care of the situation when the brownian motion hits the cut locus. As brownian motion almost never hits a set with null $\Hndos$ measure, we think our result might be useful in that field.


The paper is divided in six sections besides this introduction and an appendix.
For the convenience of the reader we have included separate statements of our results in section \ref{Section: results}  together with examples showing that some of them are sharp, a compendium of previous results in the literature and suggestions for future work.  In section \ref{section: Hamilton-Jacobi} we enlarge the class of Hamilton-Jacobi problems for which our results apply: this allows to expand the applicability of a result by Li and Nirenberg (cf. \cite{Li Nirenberg}).
Section \ref{section: definitions, split and balanced} contains all the necessary definitions that we use along the paper; although some of them have already appeared  elsewhere,  we have considered useful to collect them here in order to save the reader some effort. More important, this section contains also the key notions of 
\emph{split locus} and \emph{balanced split locus}, that play a key role in the rest of the paper.  
In section \ref{section: Balanced} we show that the cut locus of a submanifold in a Finsler metric is a balanced set. This is an extension of the corresponding Riemannian claim in \cite{Itoh Tanaka 2}, and it is necessary in order to apply our results in situations requiring the extra Finsler generality, as for instance in the already mentioned Hamilton-Jacobi problems.
Section \ref{section: First} proves our results concerning focal vectors in a balanced split locus (in the context of  a cut locus, focal minimizing geodesics), and section \ref{section: Second} contains the results about the structure of balanced split loci up to codimension 3.
An Appendix contains some important facts about Finsler exponential maps.

\paragraph{Acknowlegdements}
The first author came upon this problem after working with Yanyan Li, who gave many insights. The authors benefited from conversations with Luc Nguyen and Juan Carlos \'Alvarez Paiva.
Both authors were partially  supported during the preparation of this work by grants MTM2007-61982 and MTM2008-02686 of the MEC and the MCINN respectively.

\section{Statements of results}\label{Section: results}
\subsection{Setting}\label{section: setting}

From now on, we will work in the following setting:

\begin{itemize}
 \item A $C^{\infty}$ Finsler manifold $M$ with \emph{compact} boundary $\partial M$. The space $M\cup \partial M$ need not be compact.
 \item  The geodesic vector field $r$ in $TM$.
 \item A smooth map $M:\partial M \rightarrow TM$  that is a section of the projection map $\pi:TM\rightarrow M$ of the tangent to $M$, and such that $\Gamma(x)$ points to the inside of $M$ for every $x\in \partial M$.
\end{itemize}

Let $\Phi$ be the flow of $r$, and $D(\Phi)$ its domain. We introduce the set $V$:
\begin{equation}\label{V is ...}
 V=\left\lbrace \Phi(t,\Gamma(x)), t\geq 0, x\in \partial M, (t,\Gamma(x))\in D(\Phi) \right\rbrace
\end{equation}
The interior of $V$ is locally invariant under $\Phi_{t}$ (equivalently, $r$ is tangent to $V$).
We set $F$ to be the map $\pi\vert_{V}:V\rightarrow M$. 

We say a point $x\in V$ is a \emph{focal} point iff $d_{x}F$ is a singular map, and call $\dim \ker(d_{x}F)$ the \emph{order} of $x$. Finally, let $S$ be a balanced split locus for this setting.

\paragraph{Remark.}
Our results covers both the cut locus from a point and the cut locus from a hypersurface.
However, let us recall that, when the interest is in the cut locus, we only need to consider the exponential map from an hypersurface. The cut locus of a point is also the cut locus of a small sphere centered at the point.
In this way, our \emph{focal points} with respect to the sphere are the \emph{conjugate points} with respect to the point.
The cut locus of a smooth submanifold is also the cut locus of an $\varepsilon$-neighborhood of the submanifold.


Observe also that some authors use the term conjugate instead of focal, even when studying the distance function from a hypersurface (see for instance \cite{Li Nirenberg}).
\subsection{Results}

We will show that a cut locus is a balanced split locus (see section \ref{section: definitions, split and balanced} for the definition of this term and section \ref{section: Balanced} for the proof), so the reader may simply think that the following results apply to the cut locus. In this situation, the set $R_{p}$ with $p\in M$ consists of the
vectors tangent to the minimizing geodesics from $p$ to $\partial M$.
Nonetheless, the notation for the general case is explained in definition \ref{the set R_p}.

Our main result asserts that we can avoid focal points of order $2$ and above if we neglect a set of Hausdorff dimension $n-3$.:

\begin{theorem}[Focal points of order $2$]\label{main theorem 3}
There is a set $N\subset S$ of Hausdorff dimension at most $n-3$ such that for any $p\in S\setminus N$  and $x\in V$ such that $F(x)=p$ and $d_x F(r_{x})\in R_{p}$:
$$
\dim (\ker d_{x}F)\leq 1
$$
\end{theorem}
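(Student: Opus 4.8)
The plan is to find a set $N_{2}\subseteq M$ with $\dim_{\mathcal{H}}N_{2}\le n-3$ that contains $F(B)$, where $B:=\{x\in V:\ d_{x}F(r_{x})\in R_{F(x)},\ \dim\ker d_{x}F\ge 2\}$; then $N:=N_{2}\cap S$ lies in $S$, has $\dim_{\mathcal{H}}N\le n-3$, and has the asserted property, since any $p\in S\setminus N$ admitting some $x$ with $F(x)=p$ and $d_{x}F(r_{x})\in R_{p}$ must have $\dim\ker d_{x}F\le 1$. Because $TM$ is second countable it suffices to produce $N_{2}$ locally: I would show that every $x_{0}\in B$ has a neighborhood $U$ in $V$ with $\dim_{\mathcal{H}}F(U\cap B)\le n-3$, and then take a countable union.

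Fix $x_{0}\in B$, set $k:=\dim\ker d_{x_{0}}F\ge 2$ and $p_{0}:=F(x_{0})$, and let $\gamma$ be the geodesic through $x_{0}$. First I would record the Jacobi-field description of $dF$: in coordinates in which points of $V$ near $x_{0}$ are pairs $(t,y)$, with $y$ in a chart of $\partial M$ and $t$ the flow parameter of $r$, so that $t\mapsto F(t,y)$ is the geodesic issuing from $y$ with velocity $\Gamma(y)$, the differential $d_{(t,y)}F$ is encoded by the $(n-1)\times(n-1)$ matrix $P(t,y)$ of Jacobi fields of the variation $y\mapsto F(\cdot,y)$ written in a parallel normal frame, and $\dim\ker d_{(t,y)}F=\dim\ker P(t,y)$. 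The Lagrangian character of the initial data along $\partial M$ gives the self-adjointness identity $P^{\top}\partial_{t}P=(\partial_{t}P)^{\top}P$ for all $t$; that this identity and the accompanying Morse index theory survive in the Finsler rather than the Riemannian setting is precisely what the Appendix is for. Since $d_{x_{0}}F(r_{x_{0}})\in R_{p_{0}}$, the geodesic $\gamma$ is minimizing up to $x_{0}$, hence by the comparison between focal and cut points in a balanced split locus (Section~\ref{section: First}) $x_{0}$ is the \emph{first} focal point of $\gamma$; together with self-adjointness this forces the symmetric bilinear form $Q(v):=\langle\partial_{t}P(x_{0})\,v,\ v\rangle$ to be negative definite on the $k$-plane $\ker P(x_{0})$, the Morse-index nondegeneracy at a first focal point.

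The core of the proof is to turn this nondegeneracy into \emph{one extra codimension in the image}. Near $x_{0}$ the focal set $C:=\{\dim\ker P\ge1\}$ already satisfies $\dim_{\mathcal{H}}F(C)\le n-2$ (the classical bound, reproved for balanced split loci in Section~\ref{section: First}), and at order-$1$ points $F|_{C}$ is a local immersion onto this $(n-2)$-dimensional set. Feeding the definiteness of $Q$ on $\ker P(x_{0})$ into the second-order Taylor expansion of $F$ transverse to $C$ shows that the rank of $F$ along $C$ drops by at least one further unit on $\{\dim\ker P\ge2\}$; the aim is then to deduce, through a careful stratification of that locus and Sard-type estimates on each stratum, that $F(U\cap B)$ is contained in a countable union of $C^{\infty}$ submanifolds of $M$ of dimension at most $n-3$. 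A countable cover of $B$ then assembles the set $N_{2}$.

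The main obstacle is that $C$ and $\{\dim\ker P\ge2\}$ need not be submanifolds of $V$, and indeed the corank-$\ge2$ locus of $P$ can be genuinely large — for the round sphere it is the whole $(n-1)$-sphere of initial vectors focusing to one centre — so one may not count codimensions inside $V$ at all, only inside the $F$-images. Two ingredients make this possible, and both rely on the hypothesis $d_{x}F(r_{x})\in R_{F(x)}$: it places us at the \emph{first} focal point, where $Q$ is \emph{definite} on $\ker P$ — which genuinely fails at later focal points, and is why the hypothesis cannot be dropped — and it allows us to invoke the a priori bound $\dim_{\mathcal{H}}F(C)\le n-2$ from Section~\ref{section: First}, so that only a single additional codimension on an already $(n-2)$-dimensional set must be extracted rather than an $n$-dimensional situation controlled from scratch. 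Carrying out that extraction with no genericity assumption on the metric is the delicate heart of the argument.
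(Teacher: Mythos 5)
Your proposal identifies the right general shape of the problem (work locally near a focal point $x_0$ of order $k\geq 2$, use the Jacobi matrix $P(t,y)$ and its self-adjointness from the Lagrangian initial data, exploit that $d_{x_0}F(r_{x_0})\in R_{F(x_0)}$ forces $x_0$ to be a \emph{first} focal point, and feed second-order information on $F$ into a stratified Sard-type estimate). But the proposal stops exactly where the actual work begins — you write yourself that ``carrying out that extraction \ldots\ is the delicate heart of the argument'' — and the two specific ideas you do flesh out either need justification or point in a direction that does not match what is needed.

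First, the claim that the quadratic form $Q(v)=\langle\partial_{t}P(x_{0})v,v\rangle$ is negative definite on $\ker P(x_0)$ because $x_0$ is a first focal point is not established by what you cite. The Lagrangian identity $P^{\top}\dot P=\dot P^{\top}P$ gives, for $v\in\ker P(t_0)$, that $\dot P(t_0)v\in(\mathrm{Im}\,P(t_0))^{\perp}$; but $\ker P(t_0)$ and $(\mathrm{Im}\,P(t_0))^{\perp}$ need not coincide, so $\langle \dot P(t_0)v,w\rangle$ for $v,w\in\ker P(t_0)$ is not even automatically a symmetric pairing. Warner's regularity (condition (R2), reproved in the Appendix as Proposition~\ref{regular exponential map}) only says that $\dot P(t_0)$, viewed as a map $\ker P(t_0)\to T_{F(x_0)}M/dF(T_{x_0}V)$, is an \emph{isomorphism}; definiteness of a quadratic form is a strictly stronger assertion and has to be proved, not inferred. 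Some version of an eigenvalue-crossing statement is true at a first focal point, but the formulation and its use have to be worked out.

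Second, and more fundamentally, the object $Q$ involves the mixed second derivatives $d^{2}_{x_0}F(r\sharp v)$ for $v\in\ker dF_{x_0}$, while the obstruction to the desired codimension-$3$ estimate lives in the \emph{kernel-kernel} second derivatives $d^{2}_{x_0}F(v\sharp w)$ for $v,w\in\ker dF_{x_0}$. In the paper's special coordinates these appear as the quadratic forms $q(x_2,x_3)$ and $r(x_2,x_3)$ of~(\ref{F near order k in special coords}), and the entire proof of Theorem~\ref{theorem: conjugate of order 2} consists of classifying $(q,r)$ into types and dealing with each: types $1$ and $2$c are excluded by the balanced condition (the analogue, in your language, of your first-focal-point observation); type $2$b cannot occur by the isolation property of focal points along a ray (Proposition~\ref{regular exponential map}, item 3); types $2$a and $3$ give that the order-$\geq 2$ locus is a genuine codimension-$3$ submanifold; and type $4$, where $q=r=0$, needs a separate Morse--Sard--Federer argument on a hypersurface $H=\{\partial F^2_z/\partial x_2=0\}$. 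Your proposal has no handle on type $4$ at all: there the kernel-kernel part of $d^2F$ vanishes identically, so no amount of definiteness of $Q$ (a statement about $r\sharp\ker$) produces the missing codimension, and the ``rank along $C$ drops by one further unit'' heuristic breaks down. It also does not explain which parts of the order-$\geq 2$ locus are \emph{excluded} (rather than made small) by minimality — in the paper those are precisely types $1$ and $2$c, detected via the sign structure of $q$ and $r$, not via $\partial_t P$. So even as a plan, the proposal points at the wrong second-order invariant; the delicate stratification you defer is, in the paper, a concrete finite case analysis on $(q,r)$ that has to be done term by term.
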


Combining this new result with previous ones in the literature, we are able to provide the following description of a cut locus. All the extra results required for the proof of these result will be proved in this paper, for the convenience of the reader, and also because some of them had to be slightly generalized to serve our purposes.

\begin{theorem}[The cut locus up to $\mathcal{H}$-codimension 3] \label{complete description}
Let $S$ be either the cut locus of a point or submanifold in a Finsler manifold or the closure of the singular locus of a solution of \ref{HJequation} and \ref{HJboundarydata}.
Then $S$ consists of the following types of points :

 \begin{itemize}
 \item\textbf{Cleave points:} Points at which $R_{p}$ consists of two non-focal vectors. The set of cleave points is a smooth hypersurface;
 \item \textbf{Edge points:} Points at which $R_{p}$ consists of exactly one vector of order 1. This is a set of Hausdorff dimension at most $n-2$;
 \item\textbf{Degenerate cleave points:} Points at which $R_{p}$ consists of two vectors, such that one of them is conjugate of order 1, and the other may be non-conjugate or conjugate of order 1. This is a set of Hausdorff dimension at most $n-2$;
 \item\textbf{Crossing points:} Points at which $R_{p}$ consists of non-focal and focal vectors of order 1, and $R^{\ast}_{p} $ is contained in an affine subspace of dimension $2$. This is a rectifiable set of dimension at most $n-2$;
 \item \textbf{Remainder:} A set of Hausdorff dimension at most $n-3$;
\end{itemize}

\end{theorem}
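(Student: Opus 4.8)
The plan is to assemble this description by first establishing that $S$ is a balanced split locus (done in Section~\ref{section: Balanced} for cut loci, and in Section~\ref{section: Hamilton-Jacobi} for the singular sets of the Hamilton-Jacobi problems), so that all the structural results of Sections~\ref{section: First} and~\ref{section: Second} apply. Once that is in place, the argument is essentially a bookkeeping exercise: stratify $S$ according to the combinatorial/analytic type of the set $R_p$ (number of minimizing directions, their focal orders, the dimension of the affine hull of $R^{\ast}_p$), and check that every possibility falls into one of the five listed classes, with the claimed dimension bound.

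First I would dispose of the focal directions of order $\geq 2$: by Theorem~\ref{main theorem 3} the set $N$ of points admitting such a direction has Hausdorff dimension at most $n-3$, so it is absorbed into the Remainder, and from now on every vector in $R_p$ is either non-focal or focal of order exactly $1$. Next, among the points where $R_p$ consists of exactly two vectors, I would separate the case where both are non-focal (these are the cleave points; that this set is a smooth hypersurface is the classical fact recalled from the literature, obtained by an implicit function theorem argument applied to the equality of the two relevant arrival-time functions, whose differentials differ precisely because the vectors are distinct and non-focal) from the cases where at least one of the two is focal of order $1$ (the degenerate cleave points). Then the points where $R_p$ is a single vector, necessarily of order $1$ since a single non-focal vector would put $p$ outside $S$, are the edge points; the bound $n-2$ here, as well as for the degenerate cleave points, comes from the results of Section~\ref{section: First} on the size of the focal set within a balanced split locus. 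Finally, the points where $R_p$ has three or more elements, or two elements with extra degeneracy, are handled by the codimension-$3$ results of Section~\ref{section: Second}: one shows that unless $R^{\ast}_p$ lies in an affine $2$-plane (the crossing points, which form a rectifiable set of dimension $\leq n-2$ by the explicit local parametrization given there), the point lies in a set of Hausdorff dimension at most $n-3$ and is again swept into the Remainder.

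The one point requiring genuine care is verifying that the enumeration of types is \emph{exhaustive} and that the crossing-point case is correctly isolated: one must argue that if $p$ carries at least one non-focal and at least one focal (order $1$) direction and is not a degenerate cleave point, then either $R^{\ast}_p$ spans an affine subspace of dimension $2$ (crossing point) or $p$ belongs to the $(n-3)$-dimensional remainder — and symmetrically that configurations with three or more non-focal directions, or with higher-dimensional affine hull, are likewise negligible. I expect this dichotomy, together with the rectifiability and dimension estimate for the crossing set, to be the main obstacle, and it is exactly what the machinery of Section~\ref{section: Second} is built to provide; the remaining cases are then routine combinations of the cited results. I would close by remarking that the examples in Section~\ref{Section: results} show the dimension bounds $n-2$ and $n-3$ cannot in general be improved, so the stratification is sharp.
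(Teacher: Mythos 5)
Your plan follows the same route the paper itself takes: establish the balanced split locus property (Sections~\ref{section: Hamilton-Jacobi} and~\ref{section: Balanced}), use Theorem~\ref{main theorem 3} to sweep focal directions of order~$\geq 2$ into the remainder, use Proposition~\ref{theorem: conjugate of order 1} to bound the points carrying focal directions of order~$1$, use Proposition~\ref{main theorem 2} for the smoothness of the cleave set, and use Proposition~\ref{main theorem 4} for the rectifiability estimates governed by $\dim co\,R^{\ast}_p$. In fact, the paper never writes a final assembly step at all: Section~\ref{section: Second} opens with ``this section contains the proof of~\ref{complete description}, splitted into several lemmas'' and simply lists the lemmas, so your proposal is essentially the missing glue.

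There is, however, one concrete problem in your case analysis. You read the crossing-point clause as requiring \emph{both} a non-focal direction and a focal direction of order~$1$, and you then assert that ``configurations with three or more non-focal directions, or with higher-dimensional affine hull, are likewise negligible,'' i.e.\ fall into the $(n-3)$-remainder. This is not supported by the lemmas: if $R_p$ has three or more \emph{non-focal} elements whose duals span an affine plane of dimension exactly~$2$, then Proposition~\ref{main theorem 4} only places $p$ in an $(n-2)$-rectifiable set, not in a set of dimension~$\leq n-3$, and Proposition~\ref{theorem: conjugate of order 1} gives nothing since no direction is focal. Under your reading of the crossing type such a $p$ belongs to none of the five classes and is \emph{not} negligible, so the enumeration would not be exhaustive. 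The intended reading of the statement (and the one that makes the classification close up correctly) is that ``$R_p$ consists of non-focal and focal vectors of order $1$'' means each element of $R_p$ is either non-focal or focal of order~$1$; with that reading, the case you worried about is a crossing point, with the $n-2$ bound supplied directly by Proposition~\ref{main theorem 4} with $k=2$, and the remainder genuinely receives only the points with a focal direction of order~$\geq 2$ or with $\dim co\,R^{\ast}_p\geq 3$. Once this is fixed, the rest of your assembly matches the paper: edge and degenerate cleave points carry an order-$1$ focal direction and are bounded by Proposition~\ref{theorem: conjugate of order 1}; the single-non-focal-direction case is excluded by the split locus characterization (\ref{characterization of cut locus}); and the cleave hypersurface is Proposition~\ref{main theorem 2}.
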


Finally, in regard to singular sets of viscosity solutions to HJ equations, we prove the following  extension Theorem 1.1 of \cite{Li Nirenberg}. In this result $\partial M$ may not be compact.

\begin{theorem}\label{Hamilton Jacobi}
Let $S$ be the singular set of a solution to the Hamilton-Jacobi system 
\begin{eqnarray*}
H(x,du(x))=1&\quad&x\in M\\
u(x)=g(x)&&x\in \partial M
\end{eqnarray*} 
where $g:\partial M\rightarrow \RR$ is a positive smooth function such that
$\left\vert g(y)-g(z)\right\vert < kd(y,z)$
for some $k<1$.
If $\mu$ is the function whose value at $y\in \partial M $ is the distance to $S$ along the unique characteristic departing from $y$, then 
\begin{enumerate}
\item $\mu$ is Lipschitz.
\item If in addition $\partial M $ is compact,
then the $(n-1)$-dimensional Hausdorff measure of $S\cap K$ is finite for any compact $K$.
\item $S$ is a Finsler cut locus from the boundary of some Finsler manifold. 
\end{enumerate}
\end{theorem}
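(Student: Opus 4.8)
The plan is to reduce the Hamilton-Jacobi problem to a distance-function problem for an auxiliary Finsler metric on an auxiliary manifold, so that parts (1) and (2) follow from the corresponding statements already available for cut loci (the function $\mu$ being the analogue of the map measuring distance along characteristics to the split locus), and part (3) is then essentially automatic by construction. Concretely, given $H$ and the boundary data $g$ with Lipschitz constant $k<1$, I would first recall that the Lax-Oleinik formula \eqref{Lax-Oleinik} represents $u$, and that the characteristics emanating from $\partial M$ are the geodesics of the Finsler metric $d$ built from $H$; the subtlety introduced by a nonzero $g$ is that minimizing happens over $q\in\partial M$ with the extra weight $g(q)$, so the ``initial wavefront'' is not $\partial M$ itself but the graph-like hypersurface encoding $g$. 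The compatibility/contraction condition $|g(y)-g(z)|<kd(y,z)$ is exactly what guarantees that this modified initial value problem is non-characteristic and that through each $y\in\partial M$ there is a unique characteristic; this is where I expect to lean on the material of section \ref{section: Hamilton-Jacobi} referenced in the excerpt.

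For part (3), the cleanest route is to produce an explicit Finsler manifold $(\widetilde M,\widetilde\partial M)$ together with a Finsler metric $\widetilde d$ such that the distance from $\widetilde\partial M$ in $\widetilde M$ has the same singular set $S$. One natural device: enlarge $M$ slightly past $\partial M$ using the contraction condition to absorb $g$ into the geometry — i.e. flow $\partial M$ backwards along characteristics by an amount proportional to $g$ (possible because $k<1$ keeps these backward pieces embedded and disjoint), obtaining a new hypersurface $\widetilde\partial M$ from which the plain distance function reproduces $u$ up to an additive constant; then $S$ is by definition the cut locus of $\widetilde\partial M$. Here one must check that the Finsler metric extends smoothly and remains strongly convex on the enlarged region, which is where the smoothness and uniform convexity of $H$ in the fiber are used. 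Alternatively, one can stay on $M$ but replace the metric near $\partial M$ by a conformal-type modification realizing $g$ as a metric boundary term; either way the key is an honest construction, not an abstract existence argument.

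With (3) in hand, (1) and (2) are obtained by transporting the known results for Finsler cut loci back through this identification. For (1), $\mu$ corresponds to the function giving, along each characteristic from $y\in\widetilde\partial M$, the parameter at which that characteristic first meets the cut locus; its Lipschitz continuity is the statement that the cut-time function of a Finsler hypersurface is Lipschitz, which follows from the triangle inequality and the non-focal/first-conjugate-point estimates controlled via the Appendix on Finsler exponential maps, together with compactness of $\partial M$ being replaced by a local argument since here $\partial M$ need not be compact — one works on compact pieces and uses that the Lipschitz constant is local. For (2), once $\partial M$ is assumed compact, $S$ is the cut locus of a compact hypersurface, and the finiteness of $\Hnuno(S\cap K)$ on compacta is precisely the ``$(n-1)$-dimensional Hausdorff measure locally finite'' property of cut loci; combined with Theorem \ref{complete description} (or even just the classical codimension-one description of cut points), this gives the bound, the point being that the cleave points form a hypersurface and everything else has dimension at most $n-2$.

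The main obstacle, I expect, is part (3): making the reduction of the $g\neq 0$ problem to a genuine $g=0$ cut-locus problem rigorous, i.e. verifying that the auxiliary Finsler structure one writes down is smooth and strongly convex across the newly added collar and that its boundary-distance function has singular set equal to $S$ on the nose (not merely up to closure or up to a lower-dimensional discrepancy). The contraction hypothesis $k<1$ is doing real work here — without it the backward characteristics from $\partial M$ weighted by $g$ can cross, the initial wavefront develops its own singularities, and no such clean reduction exists — so the proof must use $k<1$ quantitatively to bound how far back one flows and to keep the collar embedded. Once that geometric bookkeeping is done, parts (1) and (2) should be short consequences of results already established earlier in the paper and in the Appendix.
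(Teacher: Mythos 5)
Your proposal matches the paper's strategy: the paper extends the Finsler manifold $(M,\varphi)$ past $\partial M$ along backward characteristics so that each $y\in\partial M$ flows backward for time $g(y)$, lands on a new hypersurface $\Lambda=\tilde{u}^{-1}(0)$ (embedded and non-self-intersecting precisely because $k<1$), and then Theorem \ref{reduce HJ to boundary value 0} identifies $u$ with the boundary-distance function to $\Lambda$ via viscosity-solution uniqueness, after which parts (1) and (2) are read off from Theorem 1.1 of \cite{Li Nirenberg} and part (3) is the construction itself. Your description of flowing $\partial M$ backward ``by an amount proportional to $g$'' and checking that the extended Finsler structure stays smooth and convex is exactly the geometric content of the paper's extension proposition and lemmas \ref{cambia metrica respeta campo}--\ref{characterization of Finsler geodesics}, so this is the same route.
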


\subsection{Examples}

We provide examples of Riemannian manifolds and exponential maps which illustrate our results.

First, consider a solid ellipsoid with two equal semiaxis and a third larger one. This is a 3D manifold with boundary, and the geodesics  starting at the two points that lie further away from the center have a first focal of order $2$ while remaining  minimizing up to that point. 
This example shows that our bound on the Hausdorff dimension of the points in the cut locus with a minimizing geodesic of order $2$ cannot be improved.

Second, consider the surface of an ellipsoid with three different semiaxis (or any generic surface as in \cite{Buchner}, with metric close to the standard sphere) and an arbitrary point in it. It is known that in the tangent space the set of first focal points is a closed curve $C$ bounding the origin, and at most of these points the kernel of the exponential map is transversal to the curve $C$. More explicitely, the set $C^{\ast}$ of points of $C$ where it is not transversal is finite. Consider then the product $M$ of two such ellipsoids. The exponential map onto $M$ has a focal point of order $2$ at any point in $(C\setminus C^{\ast})\times (C\setminus C^{\ast})$, and the kernel of the exponential map is transversal to the tangent to $C\times C$. Thus the image of the set of focal points of order $2 $ is a smooth manifold of codimension $2$. 

This example shows that the statement of theorem \ref{main theorem 3} cannot be simplified to say only that the image of the focal points of order $2$ has Hausdorff dimension $n-3$.

Finally, recall the construction in \cite{Gluck Singer}, where the authors build a riemannian surface whose cut locus is not triangulable. Their example shows that the set of points with a focal minimizing geodesic can have infinite $\Hndos$ measure. A similar construction replacing the circle in their construction with a 3d ball shows that the set of points with a minimizing geodesic focal of order $2$ can have infinite $\Hntres$ measure.

\subsection{Relation to previous results in the literature}\label{subsection: Relation to previous results in the literature}

Our structure theorem generalizes a standard result that has been proven several times by mathematicians from different fields (see for example \cite{Barden Le}, \cite{Hebda}, \cite{Mantegazza Mennucci} and \cite{Itoh Tanaka 98}):

\begin{quote}
A cut locus in a Riemannian manifold is the union of a smooth $(n-1)$-dimensional manifold $\mathcal{C}$ and a set of zero $(n-1)$-dimensional Hausdorff measure (actually, a set of Hausdorff dimension at most $n-2$). 
The set $\mathcal{C}$ consists of cleave points, which are joined to the origin or initial submanifold by exactly two minimizing geodesics, both of which are non-focal.
\end{quote}

We observe that this theorem follows from our theorem \ref{complete description}, since the union of edge, degenerate cleave, and crossing points is a set of Hausdorff dimension at most $n-2$. Our main contribution is to show that, up to codimension 3, these latter ones are the only new type of points that can appear.

The statement on cleave points quoted above follows from lemmas \ref{main theorem 2}, \ref{main theorem 4}, and \ref{theorem: conjugate of order 1} only. 
Theorem \ref{main theorem 3} is not necessary if a description is needed only up to codimension $2$.
The proof of the three lemmas is simple and has many features in common with earlier results on the cut locus. However, we have decided to include a proof of them that applies to balanced split loci, because not every balanced split locus coincides with the cut locus (see \cite{Nosotros2}), and the extra generality is necessary for forthcoming work.

In a previous paper, A. C. Mennucci studied the singular set of solutions to the HJ equations with only $C^k$ regularity. 
Under this hypothesis, the set $S\setminus \mathcal{C}$ may have Hausdorff dimension strictly between $n-1$ and $n-2$ (see \cite{Mennucci}).
We work only in a $C^{\infty}$ setting, and under this stronger condition, the set $S\setminus \mathcal{C}$ has always Haussdorf dimension at most $n-2$.

Our result \ref{complete description} uses the theory of singularities of semi-concave functions that can be found for example in \cite{Alberti Ambrosio Cannarsa}.
Though their result can be applied to a Finsler manifold, we had to give a new proof that applies to balanced sets instead of just the cut locus.

Finally, the very definition of \emph{balanced split locus} is inspired in lemma 2.1 of \cite{Itoh Tanaka 2}. Slight changes were required to adapt the property to Finsler manifolds, and the proof of the lemma itself.

\subsection{Further questions}

Theorem \ref{main theorem 3} and the classical result quoted earlier suggest the following conjecture: although the image of the focal points of order $k$ in an exponential map can have Hausdorff dimension $n-k$, the set of points in $M$ with a \emph{minimizing} geodesic of order $k$ only has Hausdorff dimension $n-k-1$.

The examples in the above section can be extended to focal points of greater order without pain, showing that this conjecture cannot be improved.

In this paper all the structure results about cut loci follow from the split and balanced properties of a cut locus. We will address the question of how many balanced split sets are there in a future paper. We believe this approach is an interesting way to look at viscosity solutions and their relation with classical solutions by characteristics.

Finally, we would like to mention that similar hypothesis and similar structure results hold in other settings. It would be interesting to study the structure of the singular locus of the solutions to other Hamilton-Jacobi equations, when the Hamiltonian depends not only on $x$ and $du$, but also in $t$ and $u$ itself, for the Dirichlet and Cauchy problems, or maybe without the convexity hypothesis on $H$.

\section{Singular locus of Hamilton-Jacobi equations}\label{section: Hamilton-Jacobi}

In this section we study the relationship between Hamilton-Jacobi equations and Finsler geometry.
The reader can find more details in \cite{Li Nirenberg} and \cite{Lions}. 

Let $M$ be an open set (or manifold) $M$ with possibly non-compact boundary.
We are interested on  solutions to the system

 \begin{eqnarray}
 H(x,du(x))=1&\quad&x\in M\notag\\
 u(x)=g(x)&&x\in \partial M\notag
 \end{eqnarray}

\noindent where  $H:T^{\ast}M\rightarrow \RR$ is a smooth function that is $1$-homogeneous and subadditive for linear combinations of covectors lying over the same point $p$, and $g:\partial M\rightarrow \RR$ is a smooth function that satisfies the following compatibility condition:
\begin{equation}\label{compatibility condition}
\left\vert g(y)-g(z)\right\vert < kd(y,z)\quad \forall y, z\in \partial  M 
\end{equation}
for some $k<1$.

As is well known, \emph{the unique viscosity solution} is given   by the Lax-Oleinik formula:
\begin{equation} 
u(p)=\inf_{q\in \partial M}
\left\lbrace 
   d(p,q)+ g(q)
\right\rbrace
\end{equation} 
where $d$ is the distance induced by the Finsler metric that is the pointwise dual of the metric in $T^{\ast}M$ given by $H$:
\begin{equation}\label{phi is the dual of H}
 \varphi_{p}(v)=\sup\left\lbrace 
\left\langle v,\alpha\right\rangle_{p}\, :\,
\alpha\in T^{\ast}_{p}M, \,H(p,\alpha)=1
\right\rbrace 
\end{equation} 
A local \emph{classical} solution can be computed near $\partial M$ following \emph{characteristic} curves, which are geodesics of the metric $\varphi$ starting from a point in $\partial M$ with initial speed given by a vector field on $\partial M$ that we call the characteristic vector field. The viscosity solution can be thought of as a way to extend the classical solution to the whole $M$.

When $g=0$, the solution (\ref{Lax-Oleinik}) is the \emph{distance to the boundary}. It can be found in \cite{Li Nirenberg}, among others, that the closure of the singular set of this function is the \emph{cut locus}\/, given for example by:
\begin{equation}\label{characterization of cut locus}
 S=\left\lbrace x\in M:
\begin{tabular}{l}
 there are at least two minimizing geodesics from $\partial M$ to $x$\\
 or the unique minimizing geodesic is focal
\end{tabular} 
\right\rbrace 
\end{equation}

Hamilton-Jacobi equations fit our setting if we let the vector field $r$ be the geodesic vector field, and $\Gamma$ be the vector field at $\partial M$ that is tangent to the departing characteristics. The map $F:V\rightarrow M$ is the map sending $(x,t)\in \partial M\times \RR$ to $ \gamma_{v(x)}(t)$, for the geodesic $\gamma$ with initial speed $v(x)$, 
where $v:\partial M \rightarrow SM$ is the characteristic vector field, and $V\subset \partial M\times \RR$ is the domain of definition of $F$. The characteristic vector at $x$ is the inner pointing normal if $g=0$ (see the appendix for the definition of normal under Finsler conditions).

Our intention in this section is to adapt this result to the case $g>0$. If $\partial M$ is compact, a global constant can be added to an arbitrary $g$ so that this is satisfied and $S$ is unchanged.
We still require that $g$ satisfies the compatibility condition \ref{compatibility condition}.
Under these conditions, our strategy will be to show that the Finsler manifold $(M,\varphi)$ can be embedded in a new manifold with boundary $(N,\tilde{\varphi})$ such that $u$ is the restriction of the unique solution $\tilde{u}$ to the problem
\begin{eqnarray}
\tilde{H}(x,d\tilde{u}(x))=1&\quad&x\in N\notag\\
\tilde{u}(x)=0 &&x\in \partial N\notag
\end{eqnarray}
thus reducing to the original problem ($\tilde{H} $ and $\tilde{\varphi}$ are dual to one another as in \ref{phi is the dual of H}). This allows us to characterize the singular set of  (\ref{Lax-Oleinik}) as a cut locus, as well as draw  conclusions  similar to those in \cite{Li Nirenberg}.

\begin{dfn}\label{indicatrix}
The \emph{indicatrix} of a Finsler metric $\varphi$ at the point $p$ is the set
$$I_{p}=\left\lbrace v\in T_{p}M\,: \, \varphi(p,v)=1\right\rbrace  $$
\end{dfn}

\begin{lem}\label{cambia metrica respeta campo} 
Let $\varphi_{0}$ and $\varphi_{1}$ be two Finsler metrics in an open set $U$, and let $X$ be a vector field in $U$ such that:
\begin{itemize}
\item The integral curves of $X$ are geodesics for $\varphi_{0}$.
\item $\varphi_{0}(p,X_{p})=\varphi_{1}(p,X_{p})=1$
\item At every $p\in U$, the tangent hyperplanes to the indicatrices of $\varphi_{0}$ and $\varphi_{1}$ in $T_{p}U$ coincide.
\end{itemize}
Then the integral curves of $X$ are also geodesics for $\varphi_{1}$
\end{lem}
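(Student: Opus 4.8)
The plan is to show that the integral curves of $X$ satisfy the geodesic equation for $\varphi_1$ by exploiting the characterization of Finsler geodesics as extremals of the length functional, together with the fact that a Finsler metric is determined infinitesimally by its indicatrix (equivalently, by the tangent hyperplanes to the indicatrix and the normalization). The third hypothesis says that at each $p$ the indicatrices $I_p^0$ and $I_p^1$ are tangent along the ray through $X_p$; since by the second hypothesis $X_p$ lies on both indicatrices, the supporting hyperplane to $I_p^1$ at $X_p$ coincides with that to $I_p^0$ at $X_p$. Dualizing, the covector $\xi_p := \partial_v \varphi_0(p, X_p)$ (the Legendre transform of $X_p$, i.e. the unique covector with $H_0(p,\xi_p)=1$ and $\langle \xi_p, X_p\rangle = 1$ realizing the hyperplane) equals $\partial_v \varphi_1(p, X_p)$. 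So the two metrics induce the \emph{same} momentum $1$-form $p \mapsto \xi_p$ along the flow of $X$.

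Next I would bring in the Hamiltonian picture. A curve $\gamma$ with $\varphi_i(\dot\gamma)=1$ is a $\varphi_i$-geodesic iff the lifted curve $t\mapsto (\gamma(t), \partial_v\varphi_i(\gamma(t),\dot\gamma(t)))$ in $T^*M$ is an integral curve of the Hamiltonian vector field of $H_i$ (the dual metric, $1$-homogeneous), up to reparametrization; equivalently $\dot\gamma = \partial_\xi H_i(\gamma,\xi)$ and $\dot\xi = -\partial_x H_i(\gamma,\xi)$. For the integral curve $\gamma$ of $X$ through $p$, hypothesis one gives that $t\mapsto(\gamma(t),\xi_{\gamma(t)})$ is the $H_0$-Hamiltonian flow line. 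I want to show it is also the $H_1$-Hamiltonian flow line. The key point is that $H_0$ and $H_1$ agree \emph{to first order} along the hypersurface $\{H=1\}$ in the cotangent directions we care about: since $H_i$ is $1$-homogeneous, it is determined by its $1$-level set, and the $1$-level sets of $H_0$ and $H_1$ (the dual indicatrices) are tangent at $\xi_p$ — this is exactly the dual restatement of the third hypothesis. Hence $dH_0$ and $dH_1$ agree at each point $(\gamma(t),\xi_{\gamma(t)})$ as covectors on $T^*M$, up to a positive scalar, which gives $X_{H_1} = \lambda\, X_{H_0}$ along the curve. Therefore the curve is an unparametrized integral curve of $X_{H_1}$ too, so its projection $\gamma$ is a $\varphi_1$-geodesic (and in fact, by the normalization $\varphi_1(p,X_p)=1$ and $1$-homogeneity forcing the Hamiltonian time-parametrization, it is even $\varphi_1$-unit-speed, so $\lambda\equiv 1$).

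I would organize the write-up as: (i) pass to the dual/cotangent side and record that the three hypotheses translate into "$H_0=H_1=1$ and $dH_0 = dH_1$ at the point $\xi_p = \mathcal{L}_{\varphi_0}(X_p)$ for each $p\in U$", where $\mathcal{L}$ is the Legendre transform — here I would use the basic fact (worth stating, maybe citing the appendix) that a convex $1$-homogeneous $H$ is recovered from $\{H=1\}$ and that tangency of level sets plus equal values forces equal differentials; (ii) recall the Hamiltonian characterization of geodesics; (iii) conclude $X_{H_1}\parallel X_{H_0}$ along the flow of $X$ and hence the geodesic conclusion. The main obstacle — more a matter of care than of real difficulty — is step (i): making precise the passage between the tangent-space statement (tangent hyperplanes to indicatrices coincide) and the cotangent-space statement (differentials of the homogeneous Hamiltonians coincide at the dual point), i.e. checking that the Legendre duality sends "common supporting hyperplane of the indicatrices at $X_p$" to "common point of the dual indicatrices together with common tangent hyperplane there". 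One must also be slightly careful that smoothness/strong convexity of the Finsler metrics makes the Legendre transform a well-defined diffeomorphism, so that "the" covector $\xi_p$ is unambiguous; all of this is standard Finsler machinery and can be quoted from the appendix.
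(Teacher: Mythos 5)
Your approach is correct but takes a genuinely different route from the paper. The paper stays on the tangent side and works in coordinates: choosing coordinates on $U$ in which $X$ has constant components, so that an integral curve $\alpha$ of $X$ has constant velocity, the second hypothesis forces $(\varphi_i)_x(\alpha,\alpha')=0$ for $i=0,1$ and the third forces $(\varphi_0)_v(\alpha,\alpha')=(\varphi_1)_v(\alpha,\alpha')$; the Euler--Lagrange form of the geodesic equation, $\varphi_x=\frac{d}{dt}\varphi_v$, then transfers verbatim from $\varphi_0$ to $\varphi_1$. You instead pass to $T^{\ast}M$ and compare Hamiltonian vector fields along the lifted curve $t\mapsto(\gamma(t),\xi_{\gamma(t)})$. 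That is a valid and more coordinate-free argument, at the cost of invoking the Legendre diffeomorphism and the Hamiltonian characterization of geodesics which the paper avoids. One imprecision worth fixing in your write-up: the tangency of $\{H_0=1\}$ and $\{H_1=1\}$ as hypersurfaces in $T^{\ast}M$ at $(p,\xi_p)$ --- which is what you really need for $dH_0\parallel dH_1$ there --- is \emph{not} merely the dual restatement of the third hypothesis. Dualizing the third hypothesis gives only tangency of the fiberwise dual indicatrices, i.e.\ agreement of the vertical components $\partial_\xi H_i$; the horizontal components $\partial_x H_i$ agree because of the \emph{second} hypothesis, which places the whole section $p\mapsto\xi_p$ inside both level hypersurfaces and, in coordinates adapted to $X$, forces $\partial_x H_0=\partial_x H_1=0$ there. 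Together the two give full tangency, and the remaining scalar between $dH_0$ and $dH_1$ is pinned to $1$ by $1$-homogeneity, since each $H_i$ has radial derivative $1$ along the fiber direction through $\xi_p$. Once these two points are spelled out, your argument closes; the paper's proof is shorter precisely because it never leaves the Lagrangian side.
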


 \begin{proof}
Let $p$ be a point in $U$. Take bundle coordinates of $T_{p}U$ around $p$ such that $X$ is one of the vertical coordinate vectors. An integral curve $\alpha$ of $X$ satifies:
$$(\varphi_{0})_{p}(\alpha(t),\alpha'(t))=
  (\varphi_{1})_{p}(\alpha(t),\alpha'(t))=0$$
because of the second hypothesis. The third hypothesis imply:
$$ (\varphi_{0})_{v}(\alpha(t),\alpha'(t)) =
  (\varphi_{1})_{v}(\alpha(t),\alpha'(t))$$
So inspection of the geodesic equation:
\begin{equation}
\varphi_{p}(\alpha(t),\alpha'(t))=
\frac{d}{dt} \left( \varphi_{v}(\alpha(t),\alpha'(t))\right) 
\end{equation}
shows that $\alpha$ is a geodesic for $\varphi_{1}$. 
\end{proof}

\begin{cor}\label{una Riemann con las geodesicas de una Fisnler} 
Let $\varphi$ be a Finsler metric and $X$ a vector field whose integral curves are geodesics. Then there is a Riemannian metric for which those curves are also geodesics.
\end{cor}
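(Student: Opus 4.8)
The goal is to produce a Riemannian metric from a Finsler metric $\varphi$ given a vector field $X$ whose integral curves are $\varphi$-geodesics, so that those same curves remain geodesics of the Riemannian metric. The natural strategy is to invoke Lemma~\ref{cambia metrica respeta campo}: it suffices to build a Riemannian metric $g$ (with associated norm $\varphi_1 = \sqrt{g(\cdot,\cdot)}$) satisfying the three bulleted hypotheses with $\varphi_0 = \varphi$, namely that $X$ has unit $\varphi_1$-length along its flow, and that at each point $p$ the tangent hyperplane to the Riemannian indicatrix (an ellipsoid) at $X_p/\varphi(p,X_p)$ agrees with the tangent hyperplane to the Finsler indicatrix $I_p$ at the same point. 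The first hypothesis of the lemma — that the integral curves of $X$ are $\varphi_0$-geodesics — is exactly our assumption, so nothing is needed there.

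The plan is to construct $g$ pointwise and then check smoothness. Fix $p$; after rescaling assume $\varphi(p, X_p) = 1$, so $X_p \in I_p$. Let $\xi_p \in T_p^\ast M$ be the supporting covector of the convex body $\{\varphi(p,\cdot)\le 1\}$ at the boundary point $X_p$; by smoothness and strong convexity of the Finsler indicatrix, $\xi_p$ is the differential $d_v\varphi$ at $X_p$ and depends smoothly on $p$, and it satisfies $\langle \xi_p, X_p\rangle = 1$ (Euler's relation). Now I want a positive-definite quadratic form $Q_p$ on $T_pM$ with $Q_p(X_p, X_p) = 1$ and with the property that the differential of $v\mapsto Q_p(v,v)$ at $v = X_p$ equals $2\xi_p$; that differential is $2 Q_p(X_p, \cdot)$, so the condition is simply $Q_p(X_p, \cdot) = \xi_p$ as linear functionals. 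Such a $Q_p$ always exists: decompose $T_pM = \mathbb{R} X_p \oplus \ker \xi_p$, declare this splitting $Q_p$-orthogonal, set $Q_p(X_p, X_p) = 1$, and put any smoothly varying positive-definite form on the subbundle $\ker \xi_p$ (for instance, the restriction of an auxiliary background Riemannian metric). One checks directly that $Q_p(X_p, X_p) = 1$ and $Q_p(X_p, w) = 0 = \langle \xi_p, w\rangle$ for $w\in\ker\xi_p$, so $Q_p(X_p,\cdot) = \xi_p$, and the indicatrix of $Q_p$ is tangent to $\xi_p^{-1}(1)$ at $X_p$, which is exactly the tangent hyperplane to $I_p$ there. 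Hence the three hypotheses of Lemma~\ref{cambia metrica respeta campo} hold, and the integral curves of $X$ are geodesics of the Riemannian metric $g$ with $g_p = Q_p$.

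The remaining point is smoothness of $p\mapsto Q_p$. The covector field $\xi_p = d_v\varphi|_{X_p}$ is smooth because $\varphi$ is smooth and fiberwise strongly convex off the zero section and $X$ is a smooth nonvanishing (where defined) vector field; consequently $\ker\xi_p$ is a smooth subbundle and the direct sum splitting $\mathbb{R}X_p \oplus \ker\xi_p$ is smooth, so the glued form $Q_p$ is smooth and positive definite. (If $\varphi(p,X_p)$ is not identically $1$, simply replace $X$ by $X/\varphi(p,X)$ at the outset, or equivalently scale $Q_p$; since the integral curves are geodesics, unit-speed reparametrization changes nothing.) I expect the only mildly delicate step to be this smooth-dependence-on-$p$ bookkeeping for the supporting hyperplanes and the transverse complement — everything else is linear algebra at a single point plus a direct appeal to the preceding lemma. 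No compactness or global hypothesis on $U$ is needed, since the construction is entirely local and patches automatically.
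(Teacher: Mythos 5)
Your proof is correct, but it takes a genuinely different route from the paper's. The paper's one-line proof defines the Riemannian metric via the fiberwise Hessian of the Finsler norm at $X$, i.e., essentially the \emph{fundamental tensor} $g_{ij}(p)=\tfrac12\,\partial^2(\varphi^2)/\partial v^i\partial v^j$ evaluated at $(p,X_p)$ (the paper writes the Hessian of $\varphi$ itself, which is actually degenerate along $X$ by Euler's identity for a $1$-homogeneous function, so the intended object must be the fundamental tensor of $\varphi^2$; this is a canonical construction in Finsler geometry, sometimes called the osculating Riemannian metric). By contrast, you split $T_pM=\RR X_p\oplus\ker\xi_p$, declare the two summands $Q_p$-orthogonal, normalize $Q_p(X_p,X_p)=1$, and glue in an arbitrary background inner product on $\ker\xi_p$. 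Both constructions verify the same three hypotheses of Lemma~\ref{cambia metrica respeta campo}: the unit-length condition $g(X,X)=1$, and the coincidence of supporting hyperplanes, which in either case reduces to the identity $g(X_p,\cdot)=\xi_p$ (by Euler's relation in the paper's case, by your direct construction in yours). The paper's choice is more canonical and needs no auxiliary metric, and it inherits positive definiteness from the strong convexity of $\varphi$; your version is more elementary — it only uses smoothness of $\varphi$ off the zero section plus linear algebra — at the cost of choosing a background metric on the complementary subbundle, and it sidesteps the small homogeneity subtlety in the paper's written formula. Both are complete proofs.
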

\begin{proof} 
The Riemannian metric $g_{ij}(p)=\frac{\partial}{\partial v_{i}v_{j}}\varphi (p,X)$ is related to $\varphi $ as in the preceeding lemma. 
\end{proof}

\begin{lem}\label{characterization of Finsler geodesics} 
Let $X$ be a non-zero geodesic vector field in a Finsler manifold and $\omega$ its dual differential one-form. Then the integral curves of $X$ are geodesics if and only if the Lie derivative of $\omega$ in the direction of $X$ vanishes.
\end{lem}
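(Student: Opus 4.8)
The plan is to translate the geodesic condition into the coordinate form of the Euler--Lagrange equation already used in the proof of Lemma~\ref{cambia metrica respeta campo}, and then to match it term by term with $i_Xd\omega$, the leverage being the homogeneity of $\varphi$ in the fibre variable. Work in bundle coordinates $(x^k,v^k)$ on $TM$, so that the dual one-form is $\omega=\omega_k\,dx^k$ with $\omega_k(x)=\varphi_{v^k}(x,X(x))$. By Euler's identity for the $1$-homogeneous function $\varphi$ one has $i_X\omega=X^k\varphi_{v^k}(x,X)=\varphi(x,X)$, which is constant because $X$ has fixed Finsler length; hence Cartan's formula gives $\mathcal L_X\omega=d(i_X\omega)+i_Xd\omega=i_Xd\omega$, and the statement reduces to: the integral curves of $X$ are geodesics if and only if $i_Xd\omega\equiv0$.

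Next I would write out the geodesic residual. Along an integral curve $\alpha$ of $X$ one has $\alpha'=X(\alpha)$ and $\varphi_{v^k}(\alpha,\alpha')=\omega_k(\alpha)$, so $\tfrac{d}{dt}\varphi_{v^k}(\alpha,\alpha')=X^j\partial_j\omega_k$ along $\alpha$; comparing with the geodesic equation $\varphi_{x^k}(\alpha,\alpha')=\tfrac{d}{dt}\varphi_{v^k}(\alpha,\alpha')$ shows that every integral curve of $X$ is a geodesic if and only if the functions $\rho_k(x):=\varphi_{x^k}(x,X(x))-X^j(x)\,\partial_j\omega_k(x)$ vanish identically on $M$ for all $k$.

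The core of the argument is then the identity $(i_Xd\omega)_k=-\rho_k$. Writing $(i_Xd\omega)_k=X^i\partial_i\omega_k-X^i\partial_k\omega_i$ and expanding $\partial_k\omega_i=\varphi_{x^kv^i}(x,X)+\varphi_{v^lv^i}(x,X)\,\partial_kX^l$, the homogeneity of $\varphi$ in $v$ enters exactly here: since $\varphi_{v^l}$ is $0$-homogeneous one gets $X^i\varphi_{v^lv^i}(x,X)=0$, killing the term with $\partial X$, and since $\varphi_{x^k}$ is $1$-homogeneous one gets $X^i\varphi_{x^kv^i}(x,X)=\varphi_{x^k}(x,X)$; together these collapse the second sum to $X^i\partial_k\omega_i=\varphi_{x^k}(x,X)$. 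Therefore $(i_Xd\omega)_k=X^i\partial_i\omega_k-\varphi_{x^k}(x,X)=-\rho_k$, and so the integral curves of $X$ are geodesics $\iff\rho_k\equiv0\ \forall k\iff i_Xd\omega\equiv0\iff\mathcal L_X\omega=0$.

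I expect the only real work to be the coordinate identity of the previous paragraph. The point that must be handled with care is that Euler's relations apply to contractions over the fibre variable $v$, so they simplify $X^i\partial_k\omega_i$ but not $X^i\partial_i\omega_k$; fortunately the latter never needs to be simplified, since it cancels directly against the corresponding term of $\rho_k$. A coordinate-free variant of the same argument would identify $\omega$ with the Legendre transform of $X$, note that its image in $T^{\ast}M$ lies in the level set $\{H=1\}$ of the dual Hamiltonian, that the pullback of the canonical symplectic form to that image is $d\omega$, and that the geodesic spray spans the characteristic line field of $\{H=1\}$; then ``the flow of $X$ lifts to the geodesic flow'' becomes ``$X$ spans the radical of $d\omega$'', i.e., $i_Xd\omega=0$. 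This is conceptually cleaner but relies on the same homogeneity facts to identify the projected characteristic direction with $X$, so I would keep the coordinate computation as the main line.
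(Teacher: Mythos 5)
Your proof is correct, and it takes a genuinely different route from the paper's. The paper disposes of the lemma in one line by reduction: it invokes Corollary~\ref{una Riemann con las geodesicas de una Fisnler} (via Lemma~\ref{cambia metrica respeta campo}) to pass to the Riemannian osculating metric $g_{ij}(p)=\varphi_{v^iv^j}(p,X_p)$, for which $\omega$ is the ordinary Riemannian dual of $X$, and then quotes the standard Riemannian fact that the flow lines of a unit field $X$ are geodesics iff $\mathcal L_X\omega=0$. You instead prove the equivalence directly in bundle coordinates: after using $\varphi(X)\equiv1$ to reduce $\mathcal L_X\omega$ to $i_Xd\omega$ via Cartan's formula, you expand $\partial_k\omega_i$ and use the two Euler identities (for the $1$-homogeneous $\varphi$ and the $0$-homogeneous $\varphi_{v^l}$, the latter killing the $\partial_kX^l$ term) to establish $(i_Xd\omega)_k = X^i\partial_i\omega_k - \varphi_{x^k}(x,X)$, which is exactly minus the geodesic residual $\rho_k$ of the Euler--Lagrange form used in the proof of Lemma~\ref{cambia metrica respeta campo}. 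Both arguments are sound. The paper's is shorter given that the auxiliary lemmas are already in place, but it outsources the final step to ``standard'' Riemannian geometry; yours is self-contained, makes the precise role of homogeneity explicit, and in effect also proves the Riemannian special case rather than relying on it. One small point worth flagging (shared by the paper): the phrase ``geodesic vector field'' in the hypothesis must really mean ``unit vector field'', since the iff is vacuous otherwise and your reduction $\mathcal L_X\omega = i_Xd\omega$ requires $\varphi(X)$ to be constant; you correctly read it this way.
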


\begin{proof}
Use lemma \ref{una Riemann con las geodesicas de una Fisnler} to replace the Finsler metric with a Riemann metric for which $\omega$ is the standard dual one-form of $X$ in Riemannian geometry. Now the lemma is standard.
\end{proof}

\begin{prop}
Let $ M $ be an open  manifold with smooth boundary and a Finsler metric $\varphi$. Let $X$ be a smooth transversal vector field in 
$\partial M $ pointing inwards (resp. outwards). 
Then $ M $ is contained in a larger open manifold admitting a smooth extension $\tilde{\varphi}$ of $\varphi$
to this open set such that the geodesics starting at points $p\in \partial M $ with initial vectors $X_{p}$
can be continued indefinitely backward (resp. forward) without intersecting each other.
\end{prop}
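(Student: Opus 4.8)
The plan is to prove the inward case; the outward one is entirely analogous, with the exterior collar parametrised by $s\in[0,\infty)$ instead of $s\in(-\infty,0]$. I would set $N:=M\cup_{\partial M}\bigl(\partial M\times(-\infty,0]\bigr)$, glued along $\partial M\simeq\partial M\times\{0\}$; this is an open manifold containing $M$, and on it I would build a smooth Finsler metric $\tilde\varphi$ extending $\varphi$ for which the coordinate lines $s\mapsto(x,s)$ of the collar are geodesics for every $s\le 0$. Distinct such lines are disjoint by construction, so the clause ``without intersecting each other'' is automatic; the whole content is the metric. Normalising $X$ so that $\varphi(X_x)\equiv 1$ only reparametrises the geodesics in the statement, so this is harmless.

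First I would gain room past $\partial M$. Positivity, smoothness off the zero section and strong convexity of the indicatrices are open conditions, so a Whitney/Seeley extension of the fundamental tensor gives a smooth Finsler metric $\hat\varphi$ on an open $\hat M\supset M\cup\partial M$. Since $X$ is transversal to $\partial M$, the map $(x,s)\mapsto\exp^{\hat\varphi}_x(sX_x)$ is, after shrinking, a diffeomorphism from a neighbourhood $\{|s|<\eta(x)\}$ of $\partial M\times\{0\}$ ($\eta>0$ continuous, constant if $\partial M$ is compact) onto a neighbourhood $W$ of $\partial M$ in $\hat M$, which I use as coordinates $(x,s)$. In them $\partial_s$ is an affinely parametrised geodesic field of $\hat\varphi$ with $\hat\varphi(\partial_s)\equiv 1$, its integral curves are the characteristics (inside $M$ for $s\ge 0$, outside $M$ for $s<0$), and by Lemma \ref{characterization of Finsler geodesics} its $\hat\varphi$-dual one-form satisfies $\mathcal L_{\partial_s}\hat\omega=0$, so $\hat\omega=\omega_i(x)\,dx^i+ds$ is independent of $s$.

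Next I would put an $s$-independent reference metric on the tail: on $\partial M\times(-\infty,0]$ let $P$ be the Riemannian metric with $P_{ss}=1$, $P_{si}=\omega_i(x)$, $P_{ij}=\delta_{ij}+\omega_i(x)\omega_j(x)$ in the coordinates $(x,s)$; a Schur complement shows it is positive definite, its $P$-dual one-form of $\partial_s$ is exactly $\hat\omega$, and since all its components are $s$-independent and $P_{ss}$ is constant, $\Gamma^k_{ss}\equiv 0$, so the coordinate lines are $P$-geodesics, now defined for all $s\le 0$. Finally I would glue: choose a cutoff $\lambda$ on $N$ with $\lambda\equiv 0$ near $M\cup\partial M$ and $\lambda\equiv 1$ outside $W$, set $\tilde\varphi:=(1-\lambda)\hat\varphi+\lambda P$ where both are defined, $\tilde\varphi:=\varphi$ on $M$, $\tilde\varphi:=P$ outside $W$. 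This is a smooth Finsler metric on $N$ (a convex combination of strongly convex norms is strongly convex) agreeing with $\varphi$ on $M$, and at $v=\partial_s$ one has $d\tilde\varphi|_{\partial_s}=(1-\lambda)\hat\omega+\lambda\hat\omega=\hat\omega$, hence $\tilde\varphi(\partial_s)\equiv 1$ and the indicatrices of $\tilde\varphi$ and $\hat\varphi$ share the tangent hyperplane $\ker\hat\omega$ at $\partial_s$. Lemma \ref{cambia metrica respeta campo} then applies on the slab where $0<\lambda<1$, and together with the regions where $\tilde\varphi$ equals $\hat\varphi$ or $P$ it shows the coordinate lines are $\tilde\varphi$-geodesics for all $s$; hence each characteristic extends backward to $s=-\infty$, and distinct characteristics never meet.

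The step I expect to be the main obstacle is exactly this last interpolation: one must pass from the extended metric $\hat\varphi$ to a metric that is geodesically complete in the collar direction \emph{without perturbing the foliation by characteristics}, and Lemma \ref{cambia metrica respeta campo} controls precisely this, provided the reference metric matches $\hat\varphi$ at $\partial_s$ both in value and in the tangent hyperplane of the indicatrix — which is what forces the choice $P_{ss}=1$, $P_{si}=\omega_i$. The extension of $\varphi$ and the transversality/collar argument are routine.
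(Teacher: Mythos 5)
Your proof is correct, but it takes a genuinely different geometric route from the paper's. The paper keeps the extended manifold \emph{bounded}: it adjoins only a short collar $U_{\varepsilon}$ past $\partial M$, replaces the geodesic field $X$ by a damped field $\tilde X_p = c(d(p))X_p$ whose cutoff $c$ vanishes near the far edge of the collar, builds a reference metric $\varphi''$ in which $\tilde X$ is a unit geodesic field (via the Lie-derivative criterion, Lemma~\ref{characterization of Finsler geodesics}), and interpolates. Backward completeness then comes from the fact that the $\tilde\varphi$-norm of the characteristic speed $X = \tilde X/c$ blows up as $c\to 0$, so the curves accumulate infinite arc length while trapped inside $\{d > -2\varepsilon/3\}$. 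You instead extend the \emph{manifold} by an infinite product collar $\partial M\times(-\infty,0]$, put an $s$-translation-invariant Riemannian reference metric $P$ there whose dual covector to $\partial_s$ is prescribed to be $\hat\omega$, and interpolate near $s=0$; completeness is then manifest because the metric stays uniformly bounded and the coordinate lines simply run to $s=-\infty$. Both arguments lean on the same two lemmas (\ref{cambia metrica respeta campo} and \ref{characterization of Finsler geodesics}) to transport the geodesic property across the interpolation region, and both verify matching value and tangent hyperplane of the indicatrix at the geodesic direction. The paper's choice keeps the ambient set a bounded neighborhood of $M$, which is convenient if one wants compactness later; your version is arguably more transparent since one does not need to check that the arc-length integral $\int 1/c$ diverges — the coordinate lines are obviously complete for the product metric. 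One small bookkeeping point you gloss over: the smooth structure on $N=M\cup_{\partial M}(\partial M\times(-\infty,0])$ near the seam should be identified with the exponential collar chart $(x,s)\mapsto\exp_x^{\hat\varphi}(sX_x)$ so that $\hat\varphi$ and $P$ genuinely live in the same coordinates on the overlap; this is routine but worth stating, since otherwise the expression $(1-\lambda)\hat\varphi+\lambda P$ is not literally defined.
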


\begin{proof} 
We will only complete the proof for a compact open set $ M $ and inward pointing vector $X$,
as the other cases require only minor modifications.

We start with a naive extension $\varphi'$ of $\varphi$ to a larger open set $ M _{2} \supset  M $. The geodesics with initial speed $X$ can be continued backwards to $ M _{2}$, and 
there is a small $\varepsilon$ for which the geodesics starting at $\partial M $ do not intersect each other for negative values of time before the parameter reaches $-\varepsilon$.

Define

$$
P:\partial  M \times (-\varepsilon,0]\to M_2, \qquad P(q,t):= \alpha_{q}(t)
$$

\noindent where $\alpha_{q}:(-\varepsilon,0]\to M_2$ is the geodesic of $\varphi'$ starting at the point $q\in \partial  M $ with initial vector $X_{q}$. 
When $p\in U_{\varepsilon}:=\Img(P)$ there is a unique  value of $t$ such that $p=P(q,t)$ for some $q\in \partial M$. We will denote such $t$ by $d(p)$.  
Extend also  the vector $X$ to $U_{\varepsilon}$ as
$X_{p}= \dot{\alpha_{q}}(t)$
where $p=P(q,t)$.

Let $c: (-\varepsilon,0]\rightarrow [0,1]$
be a smooth function such that 

\begin{itemize}
\item $c$ is non-decreasing
\item $c(t)=1 \text{ for } -\varepsilon/3 \leq t$
\item $c(t)=0 \text{ for } t\leq -2\varepsilon/3$
\end{itemize}
and finally define
$$\tilde{X}_{p}=c(d(p))X_{p}$$
in the set $U_{\varepsilon}$.

Let $\omega_{0}$ be the dual one form of $\tilde{X}$ with respect to $\varphi$ for points in $\partial M $, and let $\omega$ be the one form in $U_{\varepsilon}$ whose Lie derivative in the direction $\tilde{X}$ is zero and which coincides with $\omega_{0}$ in $\partial M $.
Then we take any metric $\varphi''$ in $U_{\varepsilon}$ (which can be chosen Riemannian) such that $\tilde{X}$ has unit norm and the kernel of $\omega$ is tangent to the indicatrix at $\tilde{X}$.

By lemma \ref{characterization of Finsler geodesics}, the integral curves of $\tilde{X}$ are geodesics for $\varphi''$. Now let $\rho$ be a smooth function in $U_{\varepsilon}\cup  M $ such that $\rho\vert_{ M }=1$, $\rho\vert_{U_{\varepsilon} \setminus U_{\varepsilon/3}}=0$ and $0\leq \rho \leq 1$, and define the metric:
$$\tilde{\varphi}= \rho(p)\varphi(p,v)+ (1-\rho(p))\varphi''(p,v)$$

This metric extends $\varphi$ to the open set $U_{\varepsilon}$ and makes the integral curves of $\tilde{X}$ geodesics.
As the integral curves of $X$ do not intersect for small $t$, 
the integral curves of $\tilde{X}$ reach infinite length before they approach $\partial U_{\varepsilon}$ and the last part of the statement follows.
\end{proof}

Application of this proposition to $ M $ and the characteristic, inwards-pointing vector field $v$ yields a new manifold $N$ containing $ M $, and a metric for $N$ that extends $\varphi$ (so we keep the same letter) such that the geodesics departing from $\partial M $ which correspond to the characteristic curves continue indefinitely backwards without intersecting.

This allows the definition of 
$$
\tilde{P}:\partial  M \times (-\infty,0]\to N, \qquad P(q,t):= \tilde{\alpha}_{q}(t)
$$
where $\tilde{\alpha} $ are the geodesics with initial condition $X$, continued backwards. Finally, define $\tilde{u}:U \rightarrow \RR$ by:
\begin{equation}\label{def of utilde}
\tilde{u}(x)=\begin{cases}
    g(y)+t& x=\tilde{P}(y,t), \quad x\in N\setminus  M \\
    u(x)& x\in  M 
     \end{cases}
\end{equation}
We notice that both definitions agree in an \emph{inner} neighborhood of $\partial M $, so the function $\tilde{u}$ is a smooth extension of $u$ to $N$.

\begin{theorem}\label{reduce HJ to boundary value 0}
Let $\Lambda=\tilde{u}^{-1}(0)$.
Then the following identity holds in $\left\lbrace \tilde{u}\geq 0 \right\rbrace $
:
\begin{equation}
\tilde{u}(x)= d(x,\Lambda)
\end{equation} 
\end{theorem}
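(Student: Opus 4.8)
The plan is to show the two inequalities $\tilde u(x) \le d(x,\Lambda)$ and $\tilde u(x) \ge d(x,\Lambda)$ separately on the set $\{\tilde u \ge 0\}$, using the Lax--Oleinik representation of $u$ together with the geometry of the extension $N$.

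First I would unwind the definition of $\Lambda = \tilde u^{-1}(0)$. On $N\setminus M$ we have $\tilde u(\tilde P(y,t)) = g(y)+t$, so the zero set there consists of the points $\tilde P(y,-g(y))$ for $y\in\partial M$ — one point on each backward characteristic, at parameter distance $g(y)$ behind $\partial M$; call this hypersurface $\Lambda_0$. Inside $M$, since $u = \inf_q\{d(\cdot,q)+g(q)\} \ge 0$ (recall $g>0$ and $d\ge 0$, or more carefully $u>0$ on $M$ by the compatibility condition applied with the two arguments being the same boundary point), $\Lambda$ meets $M$ only possibly on $\partial M$ where $u=g>0$; so in fact $\Lambda = \Lambda_0 \subset N\setminus M$. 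Then for the upper bound: given $x$ with $\tilde u(x)\ge 0$, if $x\in N\setminus M$ it lies on a backward characteristic $\tilde\alpha_y$ and the arc of that geodesic from $x$ to $\Lambda_0$ has length exactly $g(y) - (\text{parameter of } x) = \tilde u(x)$ (the characteristics are unit-speed geodesics and do not reintersect by the Proposition), giving $d(x,\Lambda)\le \tilde u(x)$. If $x\in M$, pick a near-minimizing $q\in\partial M$ with $d(x,q)+g(q)$ close to $u(x)$, concatenate a minimizing $\varphi$-geodesic from $x$ to $q$ with the backward characteristic arc from $q$ to $\tilde P(q,-g(q))\in\Lambda_0$: its length is $d(x,q)+g(q)\approx \tilde u(x)$, so again $d(x,\Lambda)\le\tilde u(x)$.

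For the lower bound I would take any rectifiable path $\sigma$ from $x$ to a point $z\in\Lambda$ and show its $\tilde\varphi$-length is at least $\tilde u(x)$. The clean way is to verify that $\tilde u$ is $1$-Lipschitz for $d$ on $\{\tilde u\ge 0\}$ and that $\tilde u = 0$ on $\Lambda$: then $\tilde u(x) = \tilde u(x) - \tilde u(z) \le d(x,z)$, and taking the infimum over $z\in\Lambda$ gives $\tilde u(x)\le d(x,\Lambda)$. Lipschitzness of $\tilde u$ follows from Lipschitzness of $u$ on $M$ (immediate from the $\inf$ of $1$-Lipschitz functions $d(\cdot,q)+g(q)$) and from the fact that on $N\setminus M$ the extension $\tilde u(\tilde P(y,t)) = g(y)+t$ has gradient of $\tilde\varphi$-norm $1$ — this is exactly the content of the classical-solution/eikonal property built into the extension ($\tilde X$ is the characteristic field, unit norm, and $\omega$ its dual with vanishing Lie derivative, so $d\tilde u = \omega$ there and $\tilde\varphi^*(d\tilde u)=1$); one also needs the two pieces to glue Lipschitzly across $\partial M$, which holds since they agree on an inner neighborhood as already noted. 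Combining the two pieces along any path crossing $\partial M$ gives the global $1$-Lipschitz bound.

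The main obstacle I expect is the lower bound near $\partial M$ and the verification that $\Lambda$ contains no unexpected components — i.e. genuinely confirming $\Lambda\subset N\setminus M$ and that $\tilde u$ does not dip to $0$ somewhere inside $M$; this is where the strict compatibility condition $|g(y)-g(z)|<kd(y,z)$ with $k<1$ is essential, since it forces $u>0$ throughout $M$ (for $x\in M$ and any $q$, $d(x,q)+g(q) > d(x,q)(1-k) + g(q') \ge 0$ for a nearby boundary point, and more simply $d(x,q)+g(q)\ge g(q)-0 >0$). A secondary technical point is checking the eikonal identity $\tilde\varphi^*(d\tilde u)=1$ on $N\setminus M$ rigorously, which amounts to rereading the construction of $\tilde\varphi$ in the preceding Proposition and Lemma~\ref{characterization of Finsler geodesics}; but since that machinery is already in place, this should be routine. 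Once both inequalities are established on $\{\tilde u\ge 0\}$, the theorem follows.
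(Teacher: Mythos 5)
Your route is genuinely different from the paper's. The paper's proof does not separate into two inequalities at all: it verifies that $\tilde{u}$ solves $H(x,d\tilde u)=1$ classically on $\mathcal O\supset N\setminus M$ (by showing that the level sets of $\tilde u$ and the $\tilde\varphi$-orthogonal distribution to the characteristic field are both invariant under the flow of $X$ and agree near $\partial M$ by Li--Nirenberg's Lemma~2.3), and then invokes the \emph{uniqueness of viscosity solutions} to the eikonal problem with zero data on $\Lambda$ to conclude $\tilde u = d(\cdot,\Lambda)$ in one stroke. You instead give a bare-hands two-sided estimate: the upper bound $d(x,\Lambda)\le\tilde u(x)$ by exhibiting competitor paths (the backward characteristic, or a near-minimizing geodesic to $\partial M$ followed by the characteristic arc), and the reverse inequality from $1$-Lipschitzness of $\tilde u$. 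This buys you a more elementary, self-contained argument that does not depend on the uniqueness theorem for viscosity solutions; the paper's appeal to uniqueness buys brevity and avoids having to track the Finsler asymmetry in the Lipschitz computation.

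Two points deserve more care than you give them. First, what you call a ``secondary technical point'' --- the eikonal identity $\tilde\varphi^*(d\tilde u)=1$ on $N\setminus M$ --- is in fact the main content of the paper's proof, not a byproduct of the construction of $\tilde\varphi$: the extension lemma only guarantees that characteristics are $\tilde\varphi$-geodesics, not that $d\tilde u$ is the dual covector of $\tilde X$; one still needs the parallel-transport argument (both $d\tilde u$ and $\omega$ have vanishing Lie derivative along $\tilde X$, and they agree at $\partial M$) that the paper carries out. Your sketch has the right ingredients, but you should present it as a real step rather than ``routine.'' Second, since $\tilde\varphi$ need not be reversible, ``$\tilde u$ is $1$-Lipschitz'' is ambiguous: from $d\tilde u(Y)\le\tilde\varphi(Y)$ one obtains $\tilde u(x)-\tilde u(z)\le d(z,x)$ (integrating along a path \emph{from} $z$ \emph{to} $x$), which matches the Lax--Oleinik convention $u(p)=\inf_q\{d(p,q)+g(q)\}$ only after fixing the orientation of $d(\cdot,\Lambda)$ consistently; make sure your competitor-path construction in the upper bound and the Lipschitz integral in the lower bound run the same way. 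With those two points tightened, the argument is correct.
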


\begin{proof} 

Let $g_{t}$ be the flux associated to the characteristic vector field $X$. By definition of $\tilde{u}$, we see that:
$$g_{t}^{\ast}\tilde{u}(x)=\tilde{u}(x)+t $$
at least for $(x,t)$ in an open set $\mathcal{O}$ containing $N\setminus  M \times (-\infty,0]$.
We deduce that $g_{t}$, restricted to a small ball $B$, sends the intersection of a level set of $\tilde{u}$ with the ball to another level set of $\tilde{u}$, whenever $t$ is small enough so that $g_{t}(B)$ is contained in $\mathcal{O}$.

In particular, the tangent distribution to the level sets is transported to itself by the flow of $X$.
On the other hand, the orthogonal distribution to $X$ is also parallel, so if we show that they coincide near $\partial M $, we will learn that they coincide in $\mathcal{O}$.

Now recall that inside $ M $, $\tilde{u}$ coincides with $u$, which is also given by the Lax-Oleinik formula \ref{Lax-Oleinik}. 
Let $y\in \partial M $ and $t>0$ small.
This formula yields the same value as the local solution by characteristics, and
we learn that the point $y$ is the closest point to $g_{t}(y)$ on the level curve $\left\lbrace u=u(y)\right\rbrace $. By appeal to lemma 2.3 in \cite{Li Nirenberg}, or reduction to the Riemannian case as in \ref{una Riemann con las geodesicas de una Fisnler}, we see that the level set $\left\lbrace u=u_{0}\right\rbrace $ is orthogonal to the vector $X_{y}$.
It follows that, in $\mathcal{O}$:
$$
H(x, d\tilde{u}(x))=\sup \lbrace\, d\tilde{u}(x)(Y)\,:\, \varphi(Y)=1\,\rbrace =d\tilde{u}(x)(\tilde{X})=1
$$

In order to show that $\tilde{u} $ and $d(\,\cdot\,,\Lambda) $ agree in $U$, we use the uniqueness properties of viscosity solutions.
Let $ N$ be the open set where $\tilde{u}>0$.
The distance function to $\Lambda$ is characterized as the unique viscosity solution to:

\begin{itemize}
\item $\tilde{u}=0$ in $\Lambda$
\item $H(x, d\tilde{u}(x))=1$ in $N$
\end{itemize}

Clearly $\tilde{u}$ satisfies the first condition. It also satisfies the second for points in the set $ M $ because it coincides with $u$, and for points in $ N \setminus  M $ because $H(x, d\tilde{u}(x))=1$ there. 

\end{proof}

\begin{proof}[Proof of Theorem \ref{Hamilton Jacobi}] 
The first part follows immediately from Theorem \ref{reduce HJ to boundary value 0} and Theorem 1.1  in \cite{Li Nirenberg}.
The second is an easy consequence of the first, while the last is contained in the results of this section.  
\end{proof}

\paragraph{Remark.} Regularity hypothesis can be softened. 
In order to apply the results in \cite{Li Nirenberg}, it is enough that the geodesic flow, the characteristic vector field and $g$ itself are $C^{2,1}$, which implies that $\Lambda$ is $C^{2,1}$. Thus the result in true for less regular hamiltonians and open sets.

\section{Split locus and balanced split locus} \label{section: definitions, split and balanced}

We now introduce some properties of a set necessary in the proofs of our results. We prove in section \ref{section: Balanced} that a cut loci in  Finsler manifolds have all of them.

\begin{dfn}\label{vector de x a y}
For a pair of points $p, q\in  M $ such that $q$ belongs to a convex neighborhood of $p$, we define, following \cite{Itoh Tanaka 2}, 
\begin{equation}
 v_{p}(q)=\dot{\gamma}(0)
\end{equation} 
as the speed at $0$ of the unique unit speed minimizing geodesic $\gamma$ from $p$ to $q$.
\end{dfn}

\begin{dfn}\label{approximate tangent cone}
The \emph{approximate tangent cone} to a subset $E$ at $p$ is:
$$T(E,p)=\left\lbrace
r\theta:\theta=\lim v_{p}(p_{n}),
\exists \lbrace p_{n}\rbrace\subset E, p_{n}\rightarrow p, r>0 
\right\rbrace$$
and the \emph{approximate tangent space} $Tan(E,p)$ to $E$ at $p$ is the vector space generated by $T(E,p)$.
\end{dfn}

We remark that the definition is independent of the Finsler metric, despite its apparent dependence on the vectors $v_{p}(p_{n}) $. 

\begin{dfn}\label{splits}
For a set $S\subset M$, let $A(S)$ be the union of all integral segments of $r$ with initial point in $\Gamma$ whose projections in $M$ do not meet $S$.
We say that a set $S\subset M$ \emph{splits} $M$ iff $\pi$ restricts to a bijection between $A(S)$ and $M\setminus S$.
\end{dfn}

Whenever $S$ splits $M$, we can define a vector field $R_{p}$ in $M\setminus S$ to be $dF_{x}(r_{x})$ for the unique $x$ in $V$ such that $F(x)=p$ and there is an integral segment of $r$ with initial point in $\Gamma$ and end point in $x$ that does not meet $F^{-1}(S)$.
\begin{dfn}\label{the set R_p}
For a point $p\in S$, we define the \emph{limit set} $R_{p}$ as the set of vectors in $T_{p}M$ that are limits of sequences of the vectors $R_{q}$ defined above at points $q\in M\setminus S$.

\end{dfn}

\begin{figure}[ht]
 \centering
 \includegraphics{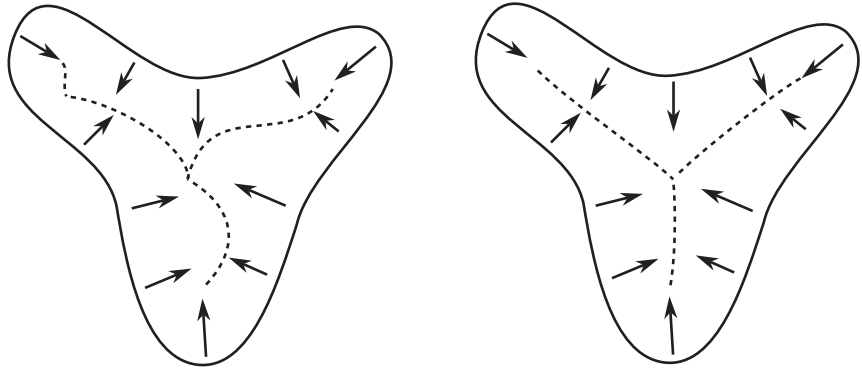}
 \caption{An arbitrary split locus and a balanced split locus}
 \label{fig: One split and one balanced}
\end{figure}

\begin{dfn}\label{split locus}
 A set $S$ that splits $M$ is a split locus iff
$$
S =\overline{
\left\lbrace 
p\in S: \quad \sharp R_{p}\geq 2
\right\rbrace }
$$
\end{dfn}

The role of this condition is to restrict $S$ to its \textit{essential} part. A set that merely splits $M$ could be too big: actually $M$ itself splits $M$.
Finally, we introduce the following more restrictive condition.

\begin{dfn}[Balanced split locus]\label{balanced} 
We say a split locus $S\subset M$ is \emph{balanced} at $p\in S$ iff 
for any sequence $\{p_n\}$ converging to $p$ with $v_{p_{n}}(p)$ and $X_{n}\in R_{p_{n}}$ approaching $v\in T_{x}M$ and  $X_{\infty}\in R_{p}$ respectively,
then
$$
w_{\infty}(v)=\max\left\lbrace w(v)\,:\, w \text{ is dual to some $R\in R_{p}$} \right\rbrace
$$
\noindent where $w_{\infty}$ is the dual of $X_{\infty}$.
We say  $S$ is \emph{balanced} if it is balanced at every point. 

\end{dfn}

\section{Balanced property of the Finsler cut locus}\label{section: Balanced}
In this section we show that the cut locus of a Finsler exponential map is a balanced set.
The proof is the same as in lemma 2.1 in \cite{Itoh Tanaka 2}, only adapted to Finsler manifolds, where angles are not defined.

\begin{prop}\label{cut locus is balanced}
The cut locus of a Finsler manifold $M$ with boundary 
is a balanced split locus.
Moreover, for $p$, $p_{n}$, $v$ and $X_{\infty}$ as in the definition of a balanced split locus, we have
$$
\lim_{n\rightarrow \infty}\dfrac{d(\partial M,p)-d(\partial M,p_{n})}{d(p,p_{n})}=
w_{\infty}(v)
$$
\end{prop}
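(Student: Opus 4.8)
The plan is to prove the quantitative limit formula first, and then deduce the ``balanced'' property as an immediate corollary, since the right-hand side of the displayed identity in Definition \ref{balanced} is built out of exactly the same quantities.

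\medskip

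\textbf{Setup and strategy.} Write $\rho(q)=d(\partial M, q)$ for the distance to the boundary; this is the function whose (closure of the) singular set is the cut locus $S$. For $q\in M\setminus S$ the function $\rho$ is smooth near $q$, its gradient (in the Finsler sense, i.e.\ the covector $d\rho_q$) is dual to the unit vector $R_q$ pointing back along the unique minimizing geodesic, and $\rho$ is built as an infimum of smooth functions $d(\cdot,y)+($const$)$ over $y\in\partial M$. First I would fix $p\in S$ and a sequence $p_n\to p$ with $v_{p_n}(p)\to v$ and $R_{p_n}\to X_\infty$, with dual covector $w_\infty$. I would estimate $\rho(p)-\rho(p_n)$ from above and below.

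\medskip

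\textbf{Upper bound.} Let $\gamma_n$ be the minimizing geodesic from $\partial M$ to $p_n$, arriving with speed $R_{p_n}$. Since $p$ lies in a convex neighbourhood of $p_n$ for large $n$, concatenating $\gamma_n$ with the minimizing geodesic from $p_n$ to $p$ gives a path from $\partial M$ to $p$, so $\rho(p)\le \rho(p_n)+d(p_n,p)$; more usefully, using the first variation formula for the Finsler length (applied to the smooth function $d(\cdot,\gamma_n(0))$, or equivalently to $d(\partial M,\cdot)$ which is smooth near $p_n$), one gets
$$
\rho(p)-\rho(p_n)\le d\rho_{p_n}\big(-v_{p_n}(p)\big)\, d(p_n,p) + o\big(d(p_n,p)\big)
= w_{p_n}\big(v_{p_n}(p)\big)\,d(p_n,p)+o\big(d(p_n,p)\big),
$$
where $w_{p_n}$ is the covector dual to $R_{p_n}$ and the sign is because $v_{p_n}(p)$ points from $p_n$ toward $p$ while $R_{p_n}$ points from $p_n$ back toward $\partial M$; here one must be slightly careful with how the duality/first-variation formula reads in the Finsler (non-symmetric) setting, and this is where the appendix on Finsler exponential maps and Lemma \ref{una Riemann con las geodesicas de una Fisnler} (reduction to a Riemannian comparison metric along a fixed geodesic) are invoked. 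Dividing by $d(p_n,p)$ and letting $n\to\infty$ gives $\limsup \frac{\rho(p)-\rho(p_n)}{d(p_n,p)}\le w_\infty(v)$, and the same argument applied with any $R\in R_p$ (realized as a limit of $R_{q_m}$ along $q_m\to p$, then a diagonal argument) shows $\limsup \le \max\{w(v): w \text{ dual to } R\in R_p\}$.

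\medskip

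\textbf{Lower bound and conclusion.} For the reverse inequality, pick the element $R\in R_p$ achieving the maximum $M:=\max\{w(v):w\text{ dual to }R\in R_p\}$, say $R=\lim R_{q_m}$ with $q_m\to p$. For each $q_m$ run the minimizing geodesic $\sigma_m$ from $\partial M$ to $q_m$ and extend/compare: the triangle-type inequality $\rho(p_n)\le \rho(q_m)+d(q_m,p_n)$ together with $\rho(q_m)\le \rho(p)+d(p,q_m)$ and the first-variation estimate for $d(\cdot,\sigma_m(0))$ at $p$ in the direction of $p_n$ yields $\rho(p_n)\le \rho(p) - w(v)\,d(p,p_n)+o(d(p,p_n))$ after sending $m\to\infty$ first (using continuity of the distance and of $v_p(\cdot)$), i.e.\ $\liminf\frac{\rho(p)-\rho(p_n)}{d(p,p_n)}\ge M$. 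Combining the two bounds gives $\lim \frac{\rho(p)-\rho(p_n)}{d(p,p_n)}=M=w_\infty(v)$, which is simultaneously the claimed limit formula and, comparing with Definition \ref{balanced}, exactly the balanced condition at $p$; since $p$ was arbitrary, $S$ is balanced. The fact that $S$ splits $M$ and satisfies the closure condition of Definition \ref{split locus} is standard for the cut locus (each point off $S$ has a unique minimizing geodesic from $\partial M$, and cleave points are dense in $S$), so $S$ is a balanced split locus.

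\medskip

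\textbf{Main obstacle.} The delicate point is making the first-variation / comparison estimates uniform as both indices $n$ and $m$ vary and the base points move, and getting the signs and the duality right in the genuinely Finslerian (asymmetric) setting, where ``angles'' are unavailable --- this is precisely why the proof follows Itoh--Tanaka's Lemma 2.1 but must route every infinitesimal comparison through the osculating Riemannian metric of Corollary \ref{una Riemann con las geodesicas de una Fisnler} and the Finsler exponential-map facts in the appendix, rather than through Riemannian angle comparison.
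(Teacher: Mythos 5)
Your overall strategy matches the paper's (prove the quantitative limit, read off the balanced condition), and your upper bound (\emph{limsup}) is essentially the paper's: it runs along the minimizing geodesics $\gamma_n$ to $p_n$ and uses first variation, as in Itoh--Tanaka. However, there are two concrete problems.

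\textbf{The lower bound is not a proof.} The chain you propose,
$\rho(p_n)\le\rho(q_m)+d(q_m,p_n)$ and $\rho(q_m)\le\rho(p)+d(p,q_m)$,
collapses (as $m\to\infty$) to the useless $\rho(p_n)\le\rho(p)+d(p,p_n)$; neither inequality has the right orientation to produce a \emph{lower} bound for $\rho(p)-\rho(p_n)$. The move the paper makes --- and which you are missing --- is to take $Y\in R_p$, let $\gamma$ be the minimizing geodesic from $\partial M$ to $p$ with speed $Y$ at $p$, and fix a \emph{single} point $q$ on $\gamma$ lying in a convex neighborhood of $p$. Because $q$ lies on a minimizing geodesic to $p$, one has the \emph{equality} $\rho(p)=\rho(q)+d(q,p)$, and the triangle inequality $\rho(p_n)\le\rho(q)+d(q,p_n)$ then yields $\rho(p)-\rho(p_n)\ge d(q,p)-d(q,p_n)$, to which first variation of the smooth function $d(q,\cdot)$ applies with a \emph{uniform} error $Cd(p,p_n)^2$. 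Your variant, with $q_m\to p$ and with first variation of $d(\,\cdot\,,\sigma_m(0))$ at $p$, fails on both counts: $\sigma_m(0)\in\partial M$ need not be in a convex neighborhood of $p$ (so that first variation formula is not available), and even if you instead put $q_m$ on $\gamma$, letting $q_m\to p$ makes the error constant in the Taylor expansion blow up, which is precisely why the paper keeps $q$ fixed. Also, the side remark in your upper bound that ``the same argument applied with any $R\in R_p$ shows $\limsup\le\max$'' is a non sequitur; the $\limsup$ is one number, you already bounded it by $w_\infty(v)$, and it is the $\liminf\ge\max$ that you still owe.

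\textbf{Sign/direction slip.} In the upper bound you write $\rho(p)-\rho(p_n)\le d\rho_{p_n}(-v_{p_n}(p))\,d(p_n,p)+o(\cdot)$ and then equate this with $w_{p_n}(v_{p_n}(p))\,d(p_n,p)$, justified by the claim that $R_{p_n}$ ``points from $p_n$ back toward $\partial M$.'' In the paper's conventions $R_{p_n}=d F_x(r_x)$ is the velocity of the geodesic flowing \emph{away} from $\partial M$, and $d\rho_{p_n}=w_{p_n}$ (not $-w_{p_n}$). The first-order expansion should be $\rho(p)-\rho(p_n)\approx w_{p_n}(v_{p_n}(p))\,d(p_n,p)$ with no extra sign; the extra minus in your intermediate step cancels your misstatement about $R_{p_n}$, which is why your final formula happens to be right, but the reasoning as written is inconsistent.

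As a minor remark, the paper does not actually route the first-variation estimate through the osculating Riemannian metric of Corollary \ref{una Riemann con las geodesicas de una Fisnler} in this proof; the Finsler first variation formula and the duality $\hat{X}$ of Definition \ref{dual one form} are used directly. Invoking the osculating metric is harmless but unnecessary here.
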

\begin{proof} 
The cut locus $S$ splits $M$, as follows from the well-known property that if a geodesic $\gamma$ from $\partial M$ to $p=\gamma(t)$ is minimizing, and $s<t$, then $\gamma \vert_{[0,s]}$ is the unique minimizing geodesic from $\partial M$ to $\gamma(s)$, and is non-focal.

It is also a split locus, as follows from the characterization of the cut locus as the closure of the singular set of the function distance to the boundary (as found in \cite{Li Nirenberg} for example). The distance to the boundary is differentiable at a point if and only if there is a unique minimizing geodesic from the point to the boundary.

Next we show that $S$ is balanced. Take any $Y\in R_{p}$, and let $\gamma$ be the minimizing geodesic segment joining $\partial M$ to $p$ with speed $Y$ at $p$. Take any point $q\in\gamma$ that lies in a convex neighborhood of $p$ and use the triangle inequality to get:
$$
d(\partial M, p)-d(\partial M, p_{n}) \geq d(q, p) - d(q,p_{n})
$$
Then the first variation formula yields, for a constant $C$:
$$
d(q, p) - d(q,p_{n})\geq w(v_{p_{n}}(p))d(p_{n},p)-Cd(p,p_{n})^{2}
$$
and we get:
$$
\liminf_{n\rightarrow \infty}\dfrac{d(\partial M,p)-d(\partial M,p_{n})}{d(p,p_{n})}\geq
w(X)
$$
for any $w$ that is dual to a vector in $R_{p}$.

Then consider $X_{\infty}$, let $\gamma$ be the minimizing geodesic segment joining $\partial M$ to $p$ with speed $X_{\infty}$ at $p$, and let $\gamma_{n}$ be the minimizing geodesic segment joining $\partial M$ to $p_{n}$ with speed $X_{n}$ at $p_n$. Take 
points $q_{n}$ in $\gamma_{n}$ that lie in a fix convex neighborhood of $p$. Again:
$$
d(\partial M, p)-d(\partial M, p_{n}) \leq d(q_{n}, p) - d(q_{n},p_{n})
$$
while the first variation formula yields, for a constant $C$:
$$
d(q_{n}, p) - d(q_{n},p_{n})\leq w(v_{p_{n}}(p))d(p_{n},p)- Cd(p,p_{n})^{2}
$$
and thus:
$$
\limsup_{n\rightarrow \infty}\dfrac{d(\partial M,p)-d(\partial M,p_{n})}{d(p,p_{n})}\leq
w_{\infty}(X)
$$

This proves the claim that $S$ is balanced. 
\end{proof}

\section{Focal points in a balanced split locus}\label{section: First}

In this section we prove Theorem \ref{main theorem 3}.
Throughout this section, $M$, $r$, $V$ and $F$ are as in section \ref{section: setting}
and $S$ is a balanced split locus as defined in \ref{balanced}.

\begin{dfn}\label{A2}
A singular point $x\in V$ of the map $F$ is an \emph{A2} point if $ker(dF_x)$ has dimension $1$ and is transversal to the tangent to the set of focal vectors.
\end{dfn}

\paragraph{Remark.} Warner shows in \cite{Warner} that the set of focal points of order $1$ is a smooth (open) hypersurface inside $V$, and that  for adequate coordinate functions in $V$ and $M$, the exponential has the following normal form around any A2 point,
\begin{eqnarray}\label{normal form for A2} 
(x_{1},x_{2},\dots , x_{m})\longrightarrow
(x_{1}^{2},x_{2}, \dots, x_{m})
\end{eqnarray}

\begin{prop}\label{no A2 in S}
 For any $p\in M$ and $X\in R_{p}$, the vector $X$ is not of the form $dF_x(r)$ for any A2 point $x$.

\end{prop}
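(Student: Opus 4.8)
The plan is to argue by contradiction using the balanced condition together with the Warner normal form at an A2 point. Suppose there is a point $p\in S$ and a vector $X\in R_p$ with $X=dF_x(r_x)$ for some A2 point $x\in V$ with $F(x)=p$. Using the normal form (\ref{normal form for A2}), choose coordinates $(x_1,\dots,x_m)$ near $x$ and coordinates near $p$ in which $F$ reads $(x_1,\dots,x_m)\mapsto(x_1^2,x_2,\dots,x_m)$; here the kernel of $dF_x$ is the $x_1$-axis and the set of focal points is $\{x_1=0\}$, whose image is the ``fold hypersurface'' $\Sigma=\{y_1=0\}$ in $M$. The key geometric fact is that $F$ maps a neighborhood of $x$ two-to-one onto the side $\{y_1\ge 0\}$ of $\Sigma$, and the vector $X=dF_x(r_x)$ is tangent to $\Sigma$ at $p$ (since $r_x$, having a component along the kernel that the differential kills, is pushed forward into $T_p\Sigma$ — one must check $r_x$ is not itself in the kernel, which holds because integral curves of $r$ through $x$ are not contained in the focal set; alternatively $r$ is transverse enough that $dF_x(r_x)\neq 0$, as $X\in R_p$ and limit vectors are nonzero unit vectors).

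Next I would produce a sequence $p_n\to p$ approaching from the non-folded side, i.e. with $p_n\in M\setminus S$ near $p$ but on the side $\{y_1>0\}$, chosen so that $v_{p_n}(p)\to v$ for a suitable $v$ and $R_{p_n}\to X_\infty$ for some $X_\infty\in R_p$; the natural choice is to take $p_n$ along a curve in $\Sigma$'s normal direction or within $\Sigma$ itself, so that $v$ points in a direction for which the ``sheet'' of $F^{-1}$ near $x$ determines $R_{p_n}$. The point of the normal form is that on the side $\{y_1>0\}$ there are two preimage sheets near $x$, and as $p_n\to p$ the vector $R_{p_n}$ (coming from whichever sheet the splitting selects) converges to $dF_x(r_x)=X$; moreover by choosing $p_n$ appropriately we can also arrange $v_{p_n}(p)\to v$ with $v$ essentially along $\pm X$ or transverse to $\Sigma$. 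Then I would compute, using the normal form, the quantity $w_\infty(v)$ against $\max\{w(v): w \text{ dual to some } R\in R_p\}$: because $X$ is tangent to $\Sigma$ and because the two sheets over $\{y_1>0\}$ give vectors whose dual one-forms pair differently with a vector $v$ pointing transversally off $\Sigma$, one of the two competing sheets yields a strictly larger value of $w(v)$ than the sheet converging to $X_\infty$. This contradicts the balanced condition (\ref{balanced}), which forces $w_\infty(v)$ to be the maximum.

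The honest way to see the inequality is a second-order (quadratic) expansion: along the fold, the function ``$d(\partial M,\cdot)$'' (or its abstract analogue built from $F$ and $r$) behaves, in the $y_1$-coordinate transverse to $\Sigma$, like $\pm\sqrt{y_1}$-type corrections on the two sheets, so the two dual covectors $w^{\pm}$ associated to the two preimages of a point near $p$ on the $\{y_1>0\}$ side satisfy $w^+(v)\neq w^-(v)$ for $v$ with nonzero $y_1$-component, with the gap not vanishing to first order as $p_n\to p$. Hence the limit $w_\infty$ coming from the selected sheet cannot be the maximizer unless it coincides with both, which the transversality in the definition of an A2 point prevents. I expect the main obstacle to be making this ``two sheets give different pairings'' statement precise and uniform: one must control the $R_{p_n}$ carefully (they are defined via the splitting, not directly via $F$), verify that the sequence $p_n$ can be chosen so that all the convergences in (\ref{balanced}) hold simultaneously with $v$ having a genuine transverse component, and quantify the first-order behaviour of the competing dual covectors near the fold. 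The Warner normal form does the heavy lifting locally, but translating it into a statement about $R_p$ and the balanced inequality — rather than about $F$ alone — is the delicate part.
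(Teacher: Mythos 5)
Your proposal has the right ingredients --- Warner's normal form at an A2 point, the observation that $X=dF_x(r_x)$ is tangent to the fold image $\Sigma=\{y_1=0\}$, and the aim to violate the balanced condition via a well-chosen approaching sequence --- but the contradiction mechanism you sketch is not the one that works, and there is an internal inconsistency in the choice of side.

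You write ``non-folded side'' but then place $p_n$ in $\{y_1>0\}$, which in the normal form $(x_1,\dots,x_m)\mapsto(x_1^2,x_2,\dots,x_m)$ is precisely the side covered twice by $F\vert_U$. The paper approaches from the \emph{other} side $B_2=\{y_1<0\}$, the one with \emph{no} preimage in $U$. This is essential: if you approach from the two-sheeted side, both preimage sheets collapse to the A2 point $x$ as $p_n\to p$, so both pushforwards $dF(r)$ converge to the \emph{same} vector $Z=X$, their dual one-forms both converge to $\hat Z$, and the gap you hope to exploit vanishes in the limit. The balanced condition constrains only the limit, so no contradiction arises that way; in fact, balance is perfectly consistent with $R_{p_n}\to Z$ and $v_{p_n}(p)\to Z$ on that side.

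The actual argument is: (i) by the embedding property of $y\mapsto(F(y),dF_y(r))$ and the normal form, $Z$ is isolated in $R_p$, and any vector in $R_{p_n}$ with $p_n\in B_2$ comes from a preimage \emph{outside} $U$, so any limit $X_\infty$ of such vectors satisfies $X_\infty\neq Z$; (ii) since $Z$ is tangent to $H=\partial B_2$, one can choose $p_n\in B_2$ with $v_{p_n}(p)\to Z$; (iii) the balanced condition then demands $w_\infty(Z)=\max\{w(Z):w\text{ dual to }R\in R_p\}\geq\hat Z(Z)=1$, while strict convexity of the Finsler indicatrix gives $w_\infty(Z)=\widehat{X_\infty}(Z)<1$ because $X_\infty\neq Z$. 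That is the contradiction. So the balance is violated not because two sheets disagree near the fold, but because approaching tangentially from the empty side forces the incoming direction to be $Z\in R_p$ while forcing the selected limit vector $X_\infty$ to be something else entirely. Your second-order/$\sqrt{y_1}$ expansion is a reasonable heuristic for the fold geometry but is not needed once this choice of side and direction is made.
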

\begin{proof}
The proof is by contradiction: let $p\in S$ be such that $R_{p}$ contains an A2 vector $Z$. There is a unique $c\in V$ such that $F(c)=p$ and $dF_{c}(r_{c})=Z$.
By the normal form (\ref{normal form for A2}), we see there is a neighborhood $U$ of $c$ such that no other point in $U$ maps to $p$. 
Furthermore, in a neighborhood $B$ of $p$ the image of the focal vectors is a hypersurface $H$ such that all points at one side (call it $B_{1}$) have two preimages of $F\vert_{U}$, all points at the other side $B_{2}$ of $H$ have no preimages, and points at $H$ have one preimage, whose corresponding vector is A2-focal.
It follows that $Z$ is isolated in $R_{p}$.

We notice there is a sequence of points $p_{n}\rightarrow p$ in $B_{2}$ with vectors $Y_{n}\in R_{p_{n}}$ such that  $Y_{n}\rightarrow Y\neq X$. Thus $R_{a}$ does not reduce to $Z$.

The vector $Z$ is tangent to $H$, so we can find a sequence of points $p_{n}\in B_{2}$ approaching $p$ such that
$$
\lim_{n\rightarrow \infty}v_{p_{n}}(p)=Z
$$
We can find a subsequence $p_{n_{k}}$ of the $p_{n}$ and vectors $X_{k}\in R_{p_{n_{k}}}$ such that $X_{k}$ converges to some $X_{\infty}\in R_{p}$. By the above, $X_{\infty}$ is different from $Z$, but $\hat{Z}(X)<1= \hat{Z}(Z)$ (where $\hat{Z}$ is the dual form to $Z$), so the balanced property is violated.
\end{proof}

The following is the analogous to theorem \ref{main theorem 3} for focal points of order $1$.

\begin{prop}[Focal points of order $1$]\label{theorem: conjugate of order 1}
There is a set $N\subset S$ of Hausdorff dimension $n-2$ such that for all $p\in S\setminus N$ and $x\in V$ such that $F(x)=p$ and $d_x F(r_{x})\in R_{p}$, the linear map $d_{x}F$ is non-singular.
\end{prop}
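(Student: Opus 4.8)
The plan is to show that the set of points $p\in S$ admitting a focal minimizing direction (a vector $X\in R_p$ of the form $d_xF(r_x)$ with $d_xF$ singular) has Hausdorff dimension at most $n-2$. By Warner's classification, a focal point $x$ of order $1$ is either an A2 point (kernel transversal to the focal hypersurface $\mathcal{F}\subset V$) or a point where the kernel is tangent to $\mathcal{F}$ (an "A3-type" or higher degeneracy, $x_1\mapsto x_1^3$ normal form). Proposition \ref{no A2 in S} already rules out the A2 case: no vector in any $R_p$ is $d_xF(r_x)$ for an A2 point $x$. So the only focal-of-order-$1$ directions that can occur in $R_p$ come from the exceptional set $\mathcal{F}^\ast\subset\mathcal{F}$ of non-A2 focal points of order $1$, i.e. points where $\ker d_xF\subset T_x\mathcal{F}$.

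First I would record that $\mathcal{F}$ is a smooth hypersurface in $V$ (Warner), hence of dimension $n-1$, and that $\mathcal{F}^\ast$ — where the smooth line field $\ker dF$ along $\mathcal{F}$ is tangent to $\mathcal{F}$ — is the zero set of a smooth function on $\mathcal{F}$ (pair the kernel direction against a conormal of $\mathcal{F}$). Generically this is codimension $1$ in $\mathcal{F}$, but we cannot assume genericity; nonetheless $\mathcal{F}^\ast$ is at worst all of $\mathcal{F}$, so $\dim_{\mathcal H}\mathcal{F}^\ast\le n-1$, and $\dim_{\mathcal H}F(\mathcal{F}^\ast)\le n-1$ trivially — which is not yet good enough. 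The key point is that at such points $F$ drops rank along the kernel direction to second order, so $F$ restricted to $\mathcal F^\ast$ still has a singular differential in the kernel direction, but more importantly we must exploit the balanced/split structure, exactly as in Proposition \ref{no A2 in S}, to cut the dimension down.

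So the main step is a balanced-locus argument: suppose $N':=\{p\in S: R_p \text{ contains a focal vector of order }1\}$ had Hausdorff dimension $>n-2$; I would aim to derive a contradiction with the balanced property, or else directly estimate $\mathcal H^{n-2}(N')$. The cleanest route is to combine two facts. (i) By Proposition \ref{no A2 in S} every such focal direction comes from $x\in\mathcal F^\ast$, and along $\mathcal F^\ast$ the image $F(\mathcal F^\ast)$ is contained in the critical values of $F|_{\mathcal F}$, where $\mathcal F$ is an $(n-1)$-manifold; the differential of $F|_{\mathcal F}$ is singular precisely on $\mathcal F^\ast$ (since the kernel direction, being tangent to $\mathcal F$, lies in $\ker d(F|_{\mathcal F})$), so by the Morse–Sard-type / area-formula estimate $\mathcal H^{n-2}\!\big(F(\mathcal F^\ast)\big)$ need not vanish but $F(\mathcal F^\ast)$ is a countable union of Lipschitz images of sets of dimension $\le n-2$ wherever $\mathcal F^\ast$ is itself $(n-2)$-rectifiable. (ii) To control the part of $\mathcal F^\ast$ of dimension between $n-2$ and $n-1$: on any such piece the balanced property forces, just as in the proof of Proposition \ref{no A2 in S}, that if $p_n\to p$ along $F(\mathcal F^\ast)$ with $v_{p_n}(p)\to Z$ (the focal/kernel direction) then the dual form $\hat Z$ must achieve the max $w_\infty(Z)=1$; but approaching $p$ from the "empty side" $B_2$ of the local fold-like picture (which exists here too, since near a non-A2 focal point of order $1$ the map $F|_U$ still has the one-sided covering behaviour of the $x_1^3$ normal form, with a region of no preimages on one side) produces a sequence violating this. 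This requires showing that near an isolated-in-$R_p$ non-A2 focal direction the same "$B_1$ with two preimages / $B_2$ with none" structure holds — which follows from Warner's normal forms for order-$1$ focal points.

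The hard part will be handling non-A2 focal directions that are \emph{not} isolated in $R_p$ — when $R_p$ contains a whole family of focal directions — since then the local one-sided covering argument of Proposition \ref{no A2 in S} does not apply verbatim. I expect to deal with this by stratifying $\mathcal F^\ast$ according to the rank of $d(F|_{\mathcal F})$ and the order of vanishing, pushing the genuinely bad set into successively smaller strata, and at each stage either invoking the balanced property (to kill strata of dimension $>n-2$) or the area formula / Eilenberg inequality (to bound the image of an $(n-2)$-dimensional stratum). Assembling these, $N'$ is covered by countably many sets of Hausdorff dimension $\le n-2$, so $\dim_{\mathcal H}N'\le n-2$, which is the assertion; the set $N$ in the statement is this $N'$ together with the (already small) set where $R_p$ is not locally understood.
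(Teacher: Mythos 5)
You have correctly identified the paper's decomposition (A2 points handled by Proposition~\ref{no A2 in S}; non-A2 order-1 focal points handled via the restriction $F|_{\mathcal F}$), but your hesitation about Morse--Sard--Federer creates an unnecessary detour and leaves an actual gap. The point you tag as ``(i)'' \emph{is} the whole argument: at any $x\in\mathcal F^\ast$, the kernel line of $d_xF$ lies in $T_x\mathcal F$, so $d_x(F|_{\mathcal F})$ has rank at most $n-2$; Federer's refinement of Morse--Sard (see \cite{Federer}, 3.4.3) says directly that the image under a $C^\infty$ map of the set where the differential has rank at most $r$ has Hausdorff dimension at most $r$. This requires no rectifiability of $\mathcal F^\ast$, no stratification of $\mathcal F^\ast$ by vanishing order, and no supplementary balanced-locus argument. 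Your part ``(ii)'' --- worrying about pieces of $\mathcal F^\ast$ of dimension between $n-2$ and $n-1$ and trying to import a $B_1/B_2$ one-sided-covering picture for non-A2 points (where Warner's $x_1^2$ normal form is \emph{not} available) --- is both unnecessary and unsubstantiated; this is where your proof would stall, and it is not how the paper proceeds.

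There is also a concrete omission: the conclusion asserts $d_xF$ is non-singular, so you must also rule out focal directions of order $\geq 2$, and your argument only addresses order $1$. The paper disposes of this first, again by Morse--Sard--Federer applied to $F$ on all of $V$: where $\dim\ker d_xF\geq 2$, the rank of $d_xF$ is at most $n-2$, so those critical values already have Hausdorff dimension at most $n-2$. (They get a much better bound, $n-3$, in Theorem~\ref{theorem: conjugate of order 2}, but $n-2$ suffices here.) With that case added and the hedging removed, your outline coincides with the paper's proof, which itself follows Itoh--Tanaka \cite{Itoh Tanaka 98}: $N$ is $F(\{\text{order}\geq 2\})\cup F(G)$ with $G$ the non-A2 order-1 focal set, both of Hausdorff dimension at most $n-2$ by Morse--Sard--Federer, and the A2 case is excluded by Proposition~\ref{no A2 in S}.
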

\begin{proof}
The proof is identical to the proof of lemma 2 in \cite{Itoh Tanaka 98} for a cut locus, but we include it here for completeness.
First of all, at the set of focal vectors of order $k\geq 2$ we can apply directly the Morse-Sard-Federer theorem (see \cite{Federer}) to show that the image of the set of focal cut vectors of order $k\geq 2$ has Hausdorff dimension at most $n-2$.

Let $Q$ be the set of focal vectors of order $1$ (recall it is a smooth hypersurface in $V$).
Let $G$ be the set of focal vectors such that the kernel of $dF$ is tangent to the focal locus. Apply the Morse-Sard-Federer theorem again to the map $F\vert_{Q}$ to show that the image of $G$ has Hausdorff dimension at most $n-2$. Finally, the previous result takes cares of the $A2$ points. 
\end{proof}

We now turn to the main result of this paper: we state and prove Theorem \ref{theorem: conjugate of order 2} which has \ref{main theorem 3} as a direct consequence.
In order to study the map $F$ more comfortably, we define the \emph{special coordinates} in a neighborhood of a focal point $z$ of order $k$.

\paragraph{Special coordinates.}
Let $\mathcal{B}=\lbrace v_{1},\dots,v_{n}\rbrace$ be the basis of $T_{z}V$ indicated in the second part of Proposition \ref{regular exponential map}, and $\mathcal{B}'_{F(z)}$ the corresponding basis at $F(z)\in M$ formed by vectors $d_{z}F(v_{1})$, 
$\widetilde{d^{2}_{z} F(v_{1}\sharp v_{2})},\dots,
\widetilde{d^{2}_{z} F(v_{1}\sharp v_{k+1})}$,
and $d_{z}F(v_{i}), i>k+1$.

Make a linear change of coordinates in a neighborhood of $F(z)$ taking $\mathcal{B}'_{F(z)}$ to the canonical basis.
The coordinate functions $F^{i}(x)-F^{i}(z)$ of $F$ for $i\neq 2,\dots,k+1$ can be extended to a coordinate system near $z$ with the help of $k$ functions having $v_{2},\dots,v_{k+1}$ as their respective gradients at $z$. In this coordinates $F$ looks:
\begin{equation}\label{F near order k in special coords} 
 F(x_{1},x_{2},\dots,x_{k+1},\dots,x_{n})=(x_{1},F_{z}^{2}(x),\dots,F_{z}^{k+1}(x),x_{k+2},\dots,x_{n})
\end{equation} 

\begin{theorem} \label{theorem: conjugate of order 2}
Let $M$, $V$, $F$ and $r$ be as in section \ref{section: setting}.
Let $S$ be a balanced split locus (\ref{balanced}).
The set of focal points of order $2$ in $V$ decomposes as the union of two subsets $Q_{2}^{1}$ and $Q_{2}^{2}$ such that:
\begin{itemize}
\item No vector in $Q_{2}^{1}$ maps under $dF$ to a vector in any of the $R_{a}$.

\item The image under $F$ of $Q_{2}^{2}$ has Hausdorff dimension at most $n-3$.
\end{itemize}
\end{theorem}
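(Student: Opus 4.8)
The strategy is to stratify the set of focal points of order $2$ according to how the kernel $\ker d_x F$ (a $2$-plane) sits relative to the set $Q_2$ of focal vectors of order $2$ and relative to the larger set $Q_1$ of focal vectors of order $\geq 1$, and then to show that only one stratum can contribute vectors to the limit sets $R_a$, while all other strata have image of Hausdorff dimension $\leq n-3$. First I would record, using Warner's analysis (\cite{Warner}) and the \emph{special coordinates} (\ref{F near order k in special coords}) with $k=2$, the relevant normal forms: near a focal point of order $2$ the map $F$ looks like $(x_1,x_2,x_3,\dots,x_n)\mapsto (x_1, F_z^2(x), F_z^3(x), x_4,\dots,x_n)$, where the Hessians of $F_z^2, F_z^3$ in the $(x_1,x_2,x_3)$ variables control the local picture. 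I would set $Q_2^2$ to be the union of: (i) the points of $Q_2$ where $\ker d_x F$ is not transversal to $Q_2$ (equivalently, not in "general position"), together with (ii) any lower-dimensional degenerate strata where the quadratic part is further degenerate; $Q_2^1$ is then the complement, i.e. the "generic" order-$2$ focal points, playing the role that A2 points played in Proposition \ref{no A2 in S}.

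**Dimension count for $Q_2^2$.** For the part of $Q_2^2$ coming from higher-order focal points ($\ker$ of dimension $\geq 3$, or the quadratic form degenerating), I would invoke the Morse–Sard–Federer theorem (\cite{Federer}) exactly as in the proof of Proposition \ref{theorem: conjugate of order 1}: these are submanifolds of $V$ of codimension $\geq 3$ (or $F$ restricted to them drops rank by the appropriate amount), so their image has Hausdorff dimension $\leq n-3$. For the part where $\ker d_x F$ fails to be transversal to $Q_2$ inside $V$, I would apply Morse–Sard–Federer to the restriction $F|_{Q_2}$: since $Q_2$ has codimension $2$ in $V$ already (order drops by $2$), $F|_{Q_2}$ has image of dimension at most $n-2$, and on the further subset where the kernel is tangent to $Q_2$ the rank of $F|_{Q_2}$ drops again, pushing the image dimension down to $n-3$. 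The bookkeeping here parallels the order-$1$ argument but must be carried out for the $2$-plane kernel intersecting $T Q_2$ in dimension $\geq 1$.

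**The key step: $Q_2^1$ contributes nothing to $R_a$.** This is the heart of the theorem and the analogue of Proposition \ref{no A2 in S}. Suppose $x\in Q_2^1$ with $d_xF(r_x)=Z\in R_p$, $p=F(x)$. Using the special coordinates and genericity (transversality of the $2$-plane kernel to $Q_2$), I would analyze the local fibers of $F$ and the local image: the image near $p$ is a semialgebraic-type set whose complement of $R_p$-contributions has a distinguished cone structure, and crucially $Z$ lies tangent to a stratum of the focal image. Then, exactly as in the A2 case, I would produce a sequence $p_n\to p$ lying in the region with "too few" preimages (so that some $Y_n\in R_{p_n}$ stays away from $Z$), chosen moreover so that $v_{p_n}(p)\to Z$ (possible because $Z$ is tangent to the relevant focal image stratum). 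Passing to a subsequence, $X_n\in R_{p_n}$ converges to some $X_\infty\in R_p$ with $X_\infty\neq Z$, and then $\hat Z(X_\infty)<1=\hat Z(Z)$ contradicts the balanced condition in Definition \ref{balanced}. The main obstacle — and where real work beyond the A2 case is needed — is establishing the correct local model for $F$ near a generic order-$2$ focal vector: one must understand the structure of $F^{-1}(p)$ and of the "shadow" set $\Img F$ near $p$ well enough to simultaneously guarantee (a) a nearby direction $v_{p_n}(p)\to Z$ realized by points whose limit sets avoid $Z$, and (b) that $Z$ is genuinely isolated (or at least separated) in $R_p$ from the competing vectors. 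This requires a careful Taylor expansion of $F_z^2, F_z^3$ and an argument that the generic quadratic behavior forces the image to have a fold/cusp-type boundary along which the balanced inequality can be tested; I expect this local normal-form analysis, rather than the dimension counting, to be the technically demanding part.
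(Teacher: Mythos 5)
Your proposed decomposition of the order-$2$ focal set is organized around a different invariant than the one the paper actually uses, and this mismatch creates a real gap. You split $Q_2$ by whether $\ker d_xF$ is transversal to $Q_2$, putting the ``generic'' (transversal) points in $Q_2^1$ and the degenerate ones in $Q_2^2$. The paper instead classifies by the algebraic type of the two quadratic forms $q,r$ appearing in the Taylor expansion of $F_z^2,F_z^3$ in the special coordinates (sum of squares / difference of squares / zero), and the resulting dichotomy does \emph{not} line up with transversality. In particular, the ``generic'' order-$2$ points in the paper's sense (type 2a, where a certain $4\times 3$ matrix of partials has rank $3$) go into $Q_2^2$, not $Q_2^1$: at those points the set $Q_2$ is locally a codimension-$3$ submanifold of $V$, so its image is already $\leq n-3$ with no rank-drop of $F|_{Q_2}$ needed and no appeal to the balanced condition. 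Conversely, the points that the balanced condition rules out (types 1 and 2c) may sit on a \emph{larger} piece of the order-$2$ focal set, which is precisely why they cannot be handled by Morse--Sard--Federer alone.

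Concretely, two steps in your argument do not survive. First, your dimension count assumes ``$Q_2$ has codimension $2$ in $V$'': that is not right in general; the $2\times 2$ minor condition cuts out a set whose codimension ranges from $3$ (type 2a, 3) down to something potentially smaller at type 1/2b/2c points, and it is exactly where the codimension drops below $3$ that the dimension argument fails and a new idea is needed. Second, the ``key step'' you describe for $Q_2^1$ treats it as a straightforward analogue of the A2 case, with $Z$ isolated in $R_p$ and a fold/cusp boundary to test the balanced inequality against; but the actual mechanism in the paper is an inspection of the quadratic forms $q,r$: for type 1, one of $q,r$ being a sum of squares confines the local image of $F$ to a half-space-like region (giving directions of approach with competing vectors in $R_{p_n}$ bounded away from $Z$), and for type 2c the condition $|\gamma/2|\geq 1$ forces the second-order image to avoid a quadrant, again leading to a sequence violating the balanced condition. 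No transversality of the kernel, by itself, yields this confinement; you would need to extract the signature/factorization structure of $q,r$ from the special coordinates, and in doing so you would rediscover the paper's type trichotomy (and would also need the separate observation, via property 3 of the regular exponential map, that type 2b cannot occur). So the missing ingredient is exactly the normal-form classification by $q$ and $r$, which is the technical core of the proof and is not a by-product of genericity.
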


\begin{proof} 

Let $z$ be a focal point of order $2$ and take special coordinates at $U_{z}$ near $z$. 
In the special coordinates near $z$, $F$ is written:
\begin{equation}
 \label{exp in local coords} 
F(x_{1},x_{2},x_{3},x_4,\dots,x_{n})=(x_{1},F_{z}^{2}(x),F_{z}^{3}(x),x_{4},\dots,x_{n})
\end{equation}
for some functions $F_{z}^{2} $ and $F_{z}^{3} $, and $x=(x_{1},\dots,x_{n})$ in a neighborhood $U_{z}$ of $z$ with $F(0,\dots,0)=(0,\dots,0)$.

The Jacobian of $F$ is:
$$
JF=\begin{bmatrix}
1 & * & * & 0 & \dots & 0\\[2ex]
0 & \frac{\partial F_{z}^{2}}{\partial x_{2}} & \frac{\partial F_{z}^{3}} {\partial x_{2}}& 0 & \dots& 0\\[2ex]
0 & \frac{\partial F_{z}^{2}}{\partial x_{3}} & \frac{\partial F_{z}^{3}} {\partial x_{3}}& 0 & \dots & 0\\[2ex]
0 & * & * & 1 & \dots & 0\\
\vdots&\vdots&\vdots&\vdots&\ddots&\vdots\\
0&* & *&0&\dots & 1
\end{bmatrix}
$$
A point $x$ is of second order if and only if the $2\times 2$ submatrix for the $x_{2}$ and $x_{3}$ variables vanish:
\begin{equation}\label{minor of vars 2 and 3} 
\begin{bmatrix}
  \frac{\partial F_{z}^{2}}{\partial x_{2}} (x) &
 \frac{\partial F_{z}^{3}} {\partial x_{2}} (x) \\[2ex]
 \frac{\partial F_{z}^{2}}{\partial x_{3}}  (x) &
 \frac{\partial F_{z}^{3}} {\partial x_{3}} (x)
\end{bmatrix}
=0
\end{equation} 

We write:
$$
F_{z}^{2}(x)=x_{1}x_{2}+q(x_{2},x_{3})+T^{2}(x)
$$
$$
F_{z}^{3}(x)=x_{1}x_{3}+r(x_{2},x_{3})+T^{3}(x)
$$
where
$ q(x_{2},x_{3})$ and $r(x_{2},x_{3}) $ are the quadratic terms in $x_{2}$ and $x_{3}$ in a Taylor expansion, and 
$T$ consists of terms of order $\geq 3$ in $x_{2}$ and $x_{3}$, and terms of order $\geq 2$ with at least one $x_{i}, i\neq 2,3$.
x
The nature of the polynomials $q$ and $r$ in the special coordinates at $z$ will determine whether $z$ is in $Q_{2}^{1}$ or in $Q_{2}^{2} $. We have the following possibilities:

\begin{enumerate}
 \item either $q$ or $r$ is a sum of squares of homogeneous linear functions in $x_{2}$ and $x_{3}$ (possibly with a global minus sign).
 \item both $q$ and $r$ are products of distinct linear functionals (equivalently, they are difference of squares). Later on, we will split this class further into three types: 2a, 2b and 2c.
 \item one of $q$ and $r$ is zero, the other is not.
 \item both $q$ and $r$ are zero.
\end{enumerate}

We set $Q_{2}^{1}$ to be the points of type 1 and 2c, and $Q_{2}^{2}$ to be the points of type 2a, 3 and 4. Points of type 2b do not appear under the hypothesis of this theorem.

\paragraph{Type 1.}
The proof is similar to Proposition \ref{no A2 in S}. Assume $z=(0,\dots,0)$ is of type 1. If, say, $q$ is a sum of squares, then in the subspace given by $x_{1}=a$ and $x_{4}=\dots=x_{n}=0$, $x_2$ will reach a minimum value that will be greater than $-Ca^{2}$ for some $C>0$.
We learn there is a sequence $p^{k}=(t^{k},-(C+1) (t^{k})^{2},0,\dots,0)$, for $t^{k}\nearrow 0$, approaching $(0,\dots,0)$  with incoming speed $(1,0,\dots,0)$ and staying in the interior of the complement of $F(U)$ for $k$ large enough. 
Pick up any vectors $V_{k}\in R_{p^{k}}$ converging to some $V_{0}$ (passing to a subsequence if necessary). Then $V_{0}$ is different from $(1,0,\dots,0)\in R_{0}$, and 
$$
\widehat{V_{0}}\left( (1,\dots,0)\right) <\widehat{(1,\dots,0)}\left( (1,\dots,0)\right)=1
$$
violating the balanced condition.

\paragraph{Type 2 and 3.}

We take special coordinates at a fixed $x_{0}$ and assume $q\neq 0$. Before we start, we will change coordinates to simplify the expression of $F$ further. Consider a linear change of coordinates near $x$ that mix only the $x_{2}$ and $x_{3}$ coordinates.
$$
\left( 
\begin{array}{c}
 x'_{2}\\
 x'_{3}
\end{array}
\right) 
=
A\cdot 
\left( 
\begin{array}{c}
 x_{2}\\
 x_{3}
\end{array}
\right)
$$
followed by the linear change of coordinates near $p$ that mix only the $y_{2}$ and $y_{3}$ coordinates with the inverse of the matrix above:
$$
\left( 
\begin{array}{c}
 y'_{2}\\
 y'_{3}
\end{array}
\right) 
=
A^{-1}\cdot 
\left( 
\begin{array}{c}
 y_{2}\\
 y_{3}
\end{array}
\right)
$$
Straightforward but tedious calculations show that there is a matrix $A$ such that the map $F$ has the following expression in the coordinates above:
$$
F(x_{1},x_{2},x_{3},x_4,\dots,x_{n})=(x_{1},x_{1}x_{2}+(x_{2}^{2}-x^{2}_{3}),x_{1}x_{3}+r(x_{2},x_{3}),x_4,\dots,x_{n})+T
$$
In other words, we can assume $q(x_{2},x_{3})=(x_{2}^{2}-x^{2}_{3})$.

Take $x_{i}$, fixed and small, $i>3$. At the origin, $JF$ is a diagonal matrix with zeros in the positions $(2,2)$ and $(3,3)$. 
We recall that $z$ is focal of order $2$ iff the submatrix (\ref{minor of vars 2 and 3}) vanishes. This submatrix is the sum of 
\begin{eqnarray}\label{jacobian of the interesting part}
\begin{bmatrix}
x_{1}+2x_{2} & r_{x_{2}}\\
 -2x_{3} & x_{1}+r_{x_{3}}
\end{bmatrix}
\end{eqnarray} 
and some terms that either have as a factor one of the $x_{i}, i>3$, or are quadratic  in $x_{2}$ and $x_{3}$. 

We want to show that, near points of type 3 and some points of type 2, all focal points of order $2$ are contained in a submanifold of codimension $3$.
The claim will follow if we show that the gradients of the four entries span a $3$-dimensional space at points in $U$. For convenience, write $r(x_{2},x_{3})=\alpha x_{2}^{2}+\beta x_{2}x_{3}+\gamma x_{3}^{2}$. It is sufficient that the matrix with the partial derivatives with respect to $x_{i}$ for $i=1,2,3$ of the four entries have rank $3$:
$$
A=
\begin{bmatrix}
1 & 2 & 0\\
0 & 0 & -2\\
0 & 2\alpha & \beta\\
1 & \beta & 2\gamma
\end{bmatrix}
$$
The claim holds for $x_{i}$ small, $i>3$, unless $\alpha=0$ and $\beta=2$. This covers points of type $3$. We say a point of type 2 has type 2a if the rank of the above matrix is $3$. Otherwise, the polynomial $r$ looks:
$$
r(x_{2},x_{3})=2 x_{2}x_{3}+\gamma x_{3}^{2}=2x_{3}(x_{2}+\frac{\gamma}{2}x_{3})
$$
We say a point of type 2 has type 2b if $r$ has the above form and $-1<\frac{\gamma}{2} <1$. We will show that there are integral curves of $r$ arbitrarily close to the one through $z$ without focal points near $z$, which contradicts property 3 in Proposition \ref{regular exponential map}. 

Take a ray $t\rightarrow \zeta_{x_{3}}(t)$ passing through a point $(0,0,x_{3},0,\dots,0) $.
The determinant of \ref{minor of vars 2 and 3} along the ray is:

$$
\begin{array}{rl}
p(t)&=
 \frac{\partial F_{z}^{2}}{\partial x_{2}} (\zeta(t)) 
 \frac{\partial F_{z}^{3}}{\partial x_{3}}  (\zeta(t)) -
 \frac{\partial F_{z}^{3}} {\partial x_{2}} (\zeta(t)) 
 \frac{\partial F_{z}^{2}} {\partial x_{3}} (\zeta(t))\\
&=t^{2}+t(4x_{2}+2\gamma x_{3})+(4x_{2}^{2}+4\gamma x_{2}x_{3}+4x_{3}^{2})
+R_{3}(x_{3},t)\\
&= (t+2x_{2}+\gamma x_{3})^{2}+(4-\gamma^{2})x_{3}^{2}
+R_{3}(x_{3},t)\\
&\geq c(t^{2}+x_{3}^{2})+R_{3}(x_{3},t)
\end{array}
$$
for a remainder $R_{3}$ of order $3$. Thus there is a $\delta >0$ such that for any $x_{3}\neq 0$ and $\vert t\vert<\delta$, $\vert x_{3}\vert<\delta$, $\zeta_{x_{3}}(t)$ is not a focal point.

We have already dealt with points of type 3, 2a and 2b.
Now we turn to the rest of points of type 2 (type 2c). We have either $\frac{\gamma}{2} \geq 1$ or $\frac{\gamma}{2} \leq -1$.
We notice that $x_{2}^{2}- x_{3}^{2}\leq 0$ iff $\vert x_{2}\vert\leq \vert x_{3}\vert $, but whenever $\vert x_{2}\vert\leq \vert x_{3}\vert $, the sign of $r(x_{2},x_{3})$ is the sign of $\gamma$. Thus the second order part of $F$ maps $U$ into the complement of points with negative second coordinate and whose third coordinate has the opposite sign of $\gamma$.

A similar argument as the one for type 1 points yields a contradiction with the balanced condition. If, for example, $\gamma \geq 2$, none of the following points 
$$
x^{k}=(t^{k},-(C+1) (t^{k})^{2},-(C+1) (t^{k})^{2},..0,)
$$ 
is in $F(U)$, for $t^{k}\rightarrow 0$. But then we can carry a vector other than $(1,0,\dots,0)$ as we approach $F(x_{0})$.


\paragraph{Type 4.}
Let $z$ be a focal point of order $2$.
We show now that the image of the points of type 4 inside $U_{z}$ has Hausdorff dimension at most $n-3$. $U_z$ is an open set around an arbitrary point $z$ of order $2$, and thus the result follows.

First, we find that for any point $x$ of type 4, we have $d^2_x F(v\sharp w)=0$ for all $v,w\in \ker d_x F$, making the computation in the special coordinates at $x\in U_z$ (see section \ref{subsection: regular exponential map} for the definition of $d^2 F$).

Then we switch to the special coordinates around $z$. 
In these coordinates, the kernel of $dF$ at $x$ is generated by $\frac{\partial }{\partial x_{2}} $ and $\frac{\partial }{\partial x_{3}} $.
Thus $\frac{\partial^2 F_{z}^{2}}{\partial x_{i}x_{j}} =0$ for $i,j=2,3$ at any point $x\in U_z$ of type 4.

The set of focal points of order $2$ is contained in the set $H=\lbrace\frac{\partial F_{z}^{2}}{\partial x_{2}} (x) =0 \rbrace$. 
This set is a smooth hypersurface: the second property in \ref{regular exponential map} implies that $\frac{\partial^2 F_{z}^{2}}{\partial x_{1}x_{2}} \neq 0 $ at points of $H$.
At every focal point of type 4, the kernel of $dF$ is contained in the tangent to $H$. Thus focal points of type 4 are focal points of the restriction of $F$ to $H$. 
The Morse-Sard-Federer theorem applies, and the image of the set of points of type 4 has Hausdorff dimension $n-3$.

%
%

\end{proof}

\begin{proof}[Proof of Theorem \ref{main theorem 3}]
Follows immediately from the above, setting $N=F(Q^{2}_{2})$.
\end{proof}

\section{Structure up to codimension 3}\label{section: Second}

This section contains the proof of \ref{complete description}, splitted into several lemmas. All of them are known for cut loci in riemannian manifolds, but we repeat the proof so that it applies to balanced split loci in Finsler manifolds.

\begin{dfn}\label{cleave point}
We say $p\in S$ is a \emph{cleave} point iff $R_{p}$ has two elements $X^{1}$ and $X^{2}$, with $(p,X^{1})=(F(y_{1}),dF_{y_{1}}(r_{y_{1}}))$ and $(p,X^{2})=(F(y_{2}),dF_{y_{2}}(r_{y_{2}}))$, and both $dF_{y_{1}}$ and $dF_{y_{2}}$ are non-singular.
\end{dfn}

\begin{prop}\label{main theorem 2}
$\mathcal{C}$ is a $(n-1)$-dimensional manifold.
\end{prop}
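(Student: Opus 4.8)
\textbf{Plan for the proof that $\mathcal{C}$ is an $(n-1)$-dimensional manifold.}

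The plan is to show that $\mathcal{C}$, the set of cleave points, is locally the graph of a smooth function over an open subset of an $(n-1)$-plane, by working near a fixed cleave point $p_0$ and using the local bijectivity properties of $F$ at the two non-focal preimages. First I would pick $p_0 \in \mathcal{C}$ with $R_{p_0} = \{X^1, X^2\}$, $X^i = dF_{y_i}(r_{y_i})$, both $dF_{y_i}$ nonsingular. Since $F$ is a local diffeomorphism near each $y_i$, there are disjoint neighborhoods $U_i$ of $y_i$ in $V$ such that $F|_{U_i}$ is a diffeomorphism onto a neighborhood $W$ of $p_0$ in $M$ (shrink so the images agree). Transporting the geodesic vector field $r$ through these diffeomorphisms gives two smooth vector fields $R^1 = (F|_{U_1})_* r$ and $R^2 = (F|_{U_2})_* r$ on $W$, with $R^i(p_0) = X^i$. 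The first key claim is that near $p_0$ the set $\mathcal{C} \cap W$ coincides with $\{\, q \in W : w^1_q = w^2_q \,\}$, where $w^i_q$ is the $\varphi$-dual one-form of $R^i_q$; equivalently with the zero set of the smooth function $h(q) = \langle w^1_q - w^2_q,\, R^1_q - R^2_q\rangle$ or some similar scalar built from the two covectors. This uses the split property (every point of $M\setminus S$ has a well-defined $R_q$ coming from one side) together with the balanced property and Proposition \ref{no A2 in S}/\ref{theorem: conjugate of order 1} to guarantee that, after shrinking $W$, no other preimages or focal phenomena intrude, so that on each side of $\mathcal{C}$ exactly one of $R^1, R^2$ is the genuine $R_q$.

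Next I would identify the defining function and check its differential is nonzero. The natural candidate is $f(q) = u^1(q) - u^2(q)$, where $u^i$ is the local smooth solution of the Hamilton–Jacobi equation (distance-type function) whose gradient direction is $R^i$, i.e. the value function of the flow on the $U_i$ side; $u^i$ is smooth on $W$ because $F|_{U_i}$ is a diffeomorphism and the flow parameter is smooth. A point $q$ lies on $\mathcal{C}$ iff the two branches of the (multivalued) solution agree there, i.e. $f(q) = 0$, and on the two sides $f$ has opposite signs (the minimizing branch is the smaller one). Then $df_{p_0} = w^1_{p_0} - w^2_{p_0}$, and this is nonzero precisely because $X^1 \neq X^2$: two distinct unit vectors for a strictly convex Finsler norm have distinct dual one-forms (the Legendre transform / duality map is injective on the indicatrix). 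Hence $df_{p_0} \neq 0$, so by the implicit function theorem $\{f = 0\}$ is a smooth hypersurface near $p_0$, and $\mathcal{C}$ coincides with it on a neighborhood of $p_0$; since $p_0 \in \mathcal{C}$ was arbitrary, $\mathcal{C}$ is a smooth $(n-1)$-dimensional manifold.

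The main obstacle I anticipate is the first key claim: proving that $\mathcal{C} \cap W$ is \emph{exactly} $\{f = 0\}$ and not merely contained in it. One inclusion (a cleave point forces $f=0$) is essentially the definition plus uniqueness of the two branches. The reverse inclusion — that every $q$ near $p_0$ with $f(q)=0$ is actually a cleave point of $S$, rather than a regular point of $M\setminus S$ or a point with a third, possibly focal, branch — requires using the split/balanced structure to rule out extra preimages on a small enough neighborhood, and using that $S$ is a split locus (so $S = \overline{\{p : \sharp R_p \geq 2\}}$) to see that the locus $\{f=0\}$ must in fact be contained in $S$: points with $f(q) = 0$ have at least two candidate directions, hence at least two minimizing-type branches, hence lie in $S$, and conversely the portion of $S$ inside $W$ cannot be larger than $\{f=0\}$ because off that set one branch strictly dominates and the corresponding integral segment of $r$ avoids $S$. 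Care is needed to set up $W$ small enough (using compactness of $\partial M$ and continuity of $F$ and the flow) that these arguments are valid simultaneously for both branches; this bookkeeping, rather than any deep idea, is where the real work lies.
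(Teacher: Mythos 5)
Your setup of the neighborhoods $U_1, U_2, W$ and the two local vector fields $R^1, R^2$, and your identification of the putative tangent distribution as $\ker(\hat X^1 - \hat X^2)$ (nonzero because the Finsler Legendre map is injective on the indicatrix), all agree with the paper. But the core step — producing a smooth defining function $f = u^1 - u^2$ with $df = w^1 - w^2$ and concluding via the implicit function theorem — has a gap that the paper's proof is specifically structured to avoid.

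First, the primitives $u^i$ need not exist. You need each $w^i = \widehat{R^i}$ to be a \emph{closed} one-form near $p_0$, but in the setting of this paper $\Gamma$ is an arbitrary inward section of $TM|_{\partial M}$, not a normal or characteristic field, and the dual form of a geodesic vector field is not closed in general (Lemma \ref{characterization of Finsler geodesics} only gives $\mathcal{L}_{R^i} w^i = 0$, hence $i_{R^i}\,dw^i = 0$, not $dw^i = 0$). Your $u^i$ is well-defined for the cut locus and for Hamilton--Jacobi solutions, where $R^i$ really is a gradient, but Proposition \ref{main theorem 2} is proved for all balanced split loci, precisely so it can be used in the more general setting of \cite{Nosotros2}. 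Second, even where $u^i$ exists, the identification $\mathcal{C} \cap W = \{u^1 = u^2\}$ is special to the cut locus. For a general balanced split locus, $S$ can be a \emph{different} level set $\{u^1 - u^2 = c\}$ (this non-uniqueness is the whole point of \cite{Nosotros2}), so the argument ``$f(q)=0 \Rightarrow$ two minimizing branches $\Rightarrow q\in S$'' is not available; it silently assumes $S$ is the cut locus. The paper's proof sidesteps both problems: it never integrates $w^i$, but works directly with the distribution $D = \ker(\hat X^1 - \hat X^2)$, uses the balanced inequality (two short one-sided monotonicity arguments in the flow time $t$) to show $S$ is a graph over a transversal $\Gamma_1$, and only afterwards reads off that the graph is an integral leaf of $D$ — without ever committing to which leaf. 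Integrability of $D$ comes out as a consequence, not an input. If you restrict your claim to the cut locus / HJ case your argument can be completed, but to prove the proposition as stated you need the paper's graph-plus-balanced-monotonicity route (or an equivalent one that does not presuppose that $R^1, R^2$ are gradients).
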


\begin{proof}
Let $p=F(y_{1})=F(y_{2})$ be a cleave point, with $R_{p}=\{dF_{y_{1}}(r), dF_{y_{2}}(r)\}$.
We can find a small neighborhood $U$ of $p$ so that the following conditions are satisfied:
\begin{enumerate}
 \item $U$ is the diffeomorphic image of neighborhoods $U_{1}$ and $U_{2}$ of the points $y_{1}$ and $y_{2}$.

Thus, the two smooth vector fields $X^{1}_{q}=dF\vert_{U_{1}}(r)$ and $X^{2}_{q}=dF\vert_{U_{2}}(r)$ are defined in points $q\in U$.
 \item At all points $q\in U$, $R_{q}\subset \lbrace X^{1}_{q},X^{2}_{q}\rbrace$. Other vectors must be images of the vector $r$ at points not in $U_{1}$ or $U_{2}$, and if they accumulate near $p$ there is a subsequence converging to a vector that is neither $X_{1}$ nor $X_{2}$.

 \item Let $\Gamma_{1}$ be an hypersurface in $U_{1}$ passing through $y_{1}$ and transversal to $X_{1}$, and let $\tilde{\Gamma} =F(\Gamma)$. We define local coordinates $q=(x, t)$ in $U$, where $x\in \tilde{\Gamma}$ and $t\in \RR$ are the unique values for which $q$ is obtained by following the integral curve of $X^{1}$ that starts at $x$ for time $t$.
$U$ is a cube in these coordinates.

\end{enumerate}

We will show that $S$ is a  graph in the coordinates $(x,t)$.
Let $A_{i}$ be the set of points $q$ for which $R_{q}$ contains $X^{i}_{q}$, for $i=1,2$. 
By the hypothesis, $S=A_{1}\cap A_{2}$.

Every tangent vector $v$ to $S$ at $q\in S$ (in the sense of \ref{approximate tangent cone}), satisfies the following property (where $\hat{X}$ is the dual covector to a vector $X\in TM$.):
$$
\hat{X}^{i}(v)=
\max_{Y\in R_{p}}
\hat{Y}(v)
$$
which in this case amounts to $\hat{X}^{1}(v)=\hat{X}^{2}(v)$, or
$$
v\in \ker(\hat{X}^{1}-\hat{X}^{2})
$$

We can define in $U$ the smooth distribution $D=\ker(\hat{X}^{1}-\hat{X}^{2})$. 
$S$ is a closed set whose approximate tangent space is contained in $D$.

We first claim that for all $x$, there is at most one time $t_{0}$ such that $(x,t_{0})$ is in $S$. 
If $(x,t)$ is in $A_{1}$, $R_{(x,t)}$ contains $X^{1}$ and, unless $(x,s)$ is contained in $A_{1}$ for $s$ in an interval $(t-\varepsilon,t)$, we can find a sequence $(x_{n},t_{n})$ converging to $(x,t)$ with $t_{n}\nearrow t$ and carrying vectors $X^{2}$. The incoming vector is $X^{1}$, but
$$
\tilde{X^2}(X^1)<\tilde{X^1}(X^1)=1
$$
which contradicts the balanced property.
Analogously, if $R_{(x,t)}$ contains $X^{2}$ there is an interval $(t,t+\varepsilon)$ such that $(x,s)$ is contained in $A_{2}$ for all $s$ in the interval. Otherwise there is a sequence $(x_{n},t_{n})$ converging to $(x,t)$ with $t_{n}\searrow t$ and carrying vectors $X^{1}$. The incoming vector is $-X^{1}$, but
$$
-1=\tilde{X^1}(-X^1)<\tilde{X^2}(-X^1)
$$
which is again a contradiction. The claim follows easily.

We show next that the set of $x$ for which there is a $t$ with $(x,t)\in S$ is open and closed in $\Gamma$, and thus $S$ is the graph of a function $h$ over $\Gamma$.
Take $(x,t)\in U\cap S$ 
and choose a cone $D_{\varepsilon}$ around $D_{x}$. We can assume the cone intersects $\partial U$ only in the $x$ boundary. There must be a point in $S$ of the form $(x',t')$ inside the cone for all $x'$ sufficiently close to $x$: otherwise there is either a sequence $(x_n,t_n)$ approaching $(x,t)$ with $t_n>h_+(x)$ ($h$ being the upper graph of the cone $D_{\varepsilon}$) and carrying vectors $X^{1}$ or a similar sequence with $t_n<h_-(x)$ and carrying vectors $X^{2}$.
Both options violate the balanced condition. 
Closedness follows trivially from the definition of $S$.

Define $t=h(x)$ whenever $(x,t)\in S$. The tangent to the graph of $h$ is given by $D$ at every point, thus $S$ is smooth and indeed an integral maximal submanifold of $D$.
\end{proof}

\paragraph{Remark.} It follows from the proof above that there cannot be any balanced split locus unless $D$ is integrable. This is not strange, as the sister notion of cut locus does not make sense if $D$ is not integrable.

We recall that the orthogonal distribution to a geodesic vector field is parallel for that vector field, so the distribution is integrable at one point of the geodesic if and only if it is integrable at any other point. In particular, if the vector field leaves a hypersurface orthogonally (which is the case for a cut locus) the distribution $D$ (which is the difference of the orthogonal distributions to two geodesic vector fields) is integrable. It also follows from \ref{Hamilton Jacobi} that the characteristic vector field in a Hamilton-Jacobi problem has an integrable orthogonal distribution.

\paragraph{Remark.} We commented earlier on our intention of studying whether a balanced split locus is actually a cut locus. The proof of the above lemma showed there is a unique sheet of cleave points near a given point in a balanced split loci. It is not too hard to deal with the case when all incoming geodesics are non-focal, but focal geodesics pose a major problem.

\begin{prop}\label{main theorem 4}
The set of points $p\in S$ where $co\, (R^{\ast}_{p})$ has dimension $k$ is $(n-k)$-rectifiable.
\end{prop}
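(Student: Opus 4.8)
The plan is to show that the set of points $p \in S$ where $\dim\mathrm{co}(R^*_p) = k$ is contained in a countable union of $(n-k)$-dimensional Lipschitz graphs, which establishes $(n-k)$-rectifiability. First I would fix such a point $p$ and let $X^1,\dots,X^m \in R_p$ be the vectors whose duals span (after taking convex hull) a $k$-dimensional affine subspace of $T^*_pM$; by the structure results already in hand (in particular Proposition \ref{theorem: conjugate of order 1} and Theorem \ref{theorem: conjugate of order 2}), after discarding a set of Hausdorff dimension at most $n-3$ — which is harmless since $k \le n-1$ means we only care about $k \in \{1,\dots,n-1\}$ and $n-3 < n-k$ exactly when $k < 3$, so I must be careful here — I can assume near $p$ that the relevant incoming vectors come from finitely many smooth branches $X^i_q = dF|_{U_i}(r)$ defined on a neighborhood $U$, exactly as in the proof of Proposition \ref{main theorem 2}. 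Actually the cleaner route: it suffices to prove the claim locally and to bound the rectifiable dimension using the approximate tangent cone, so I would work with the dual covectors $\hat X^i_q$ as smooth one-forms on $U$.

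The key observation, lifted from the cleave-point argument, is the \emph{balancing identity for tangent vectors}: if $v \in T(S,p)$ is an approximate tangent vector to $S$ at $p$, realized as a limit $v = \lim v_{p_n}(p)$ with $p_n \to p$ in $M \setminus S$, then for the limit vector $X_\infty \in R_p$ of the carried vectors one has $\hat X_\infty(v) = \max\{\,\hat X(v) : X \in R_p\,\}$, by the balanced property (Definition \ref{balanced}). Since $p_n \in M\setminus S$ carries \emph{some} vector and $S$ is approached from both "sides," running this for sequences carrying each of the branches $X^i$ forces $\hat X^i(v) = \hat X^j(v)$ for all $i,j$ among the branches that actually appear — so $v$ lies in the common kernel $\bigcap_{i,j}\ker(\hat X^i - \hat X^j)$. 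When $\mathrm{co}(R^*_p)$ has affine dimension $k$, the linear span of the differences $\hat X^i - \hat X^j$ has dimension $k$, so this common kernel has dimension exactly $n-k$. Hence $T(S,p) \subset$ an $(n-k)$-dimensional linear subspace $D_p$ of $T_pM$.

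Next I would upgrade "the approximate tangent cone is contained in an $(n-k)$-plane at every point of the stratum" to rectifiability. The subtle point is that $D_p$ depends on $p$ and on which branches appear, so $D$ is not a globally smooth distribution on the stratum; but on the locally finite decomposition of $S$ according to which finite collection of smooth branches $\{X^{i_1},\dots,X^{i_\ell}\}$ is exactly the set of branches meeting a small neighborhood, the subspace $D_q = \bigcap \ker(\hat X^{i_a} - \hat X^{i_b})$ \emph{is} a smooth distribution of rank $n-k$ on that piece. A standard measure-theoretic fact (a closed set whose approximate tangent cone at each of its points is contained in a member of a fixed smooth $(n-k)$-plane field is countably $(n-k)$-rectifiable — e.g. it meets each small coordinate box in a Lipschitz graph over the corresponding $(n-k)$-plane, by a cone/Lipschitz argument identical in spirit to the graph argument in Proposition \ref{main theorem 2}) then gives the conclusion on each piece, and the countable union of these pieces covers the stratum.

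\medskip

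The main obstacle I expect is the bookkeeping in the first step: near a point $p$ with $\dim\mathrm{co}(R^*_p) = k$ one must control \emph{all} of $R_p$, not just finitely many non-focal branches — focal incoming vectors of order $1$ (and, off a set of dimension $n-3$, order $1$ is all that occurs by Theorem \ref{main theorem 3}) need the normal forms of Warner and the special-coordinate analysis to guarantee that locally $R_q$ for nearby $q$ is captured by finitely many continuous (if not smooth) families, so that the limit-of-carried-vectors argument and the resulting kernel computation go through with the correct dimension count. Handling the degenerate-cleave and crossing configurations — where one branch is focal — inside the $(n-k)$-plane estimate, without losing a dimension, is where the care is required; everything else is the cone/Lipschitz-graph machinery already rehearsed in the cleave case.
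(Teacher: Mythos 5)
Your core idea is right, and overlaps with the paper's: the balanced property forces the approximate tangent cone at a point $p$ of the stratum to lie inside $(co\,R_p^\ast)^\perp$, which is $(n-k)$-dimensional. But your route to rectifiability diverges from the paper's and runs into two concrete problems that the paper's proof sidesteps.

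First, you try to reduce to finitely many smooth branches by discarding focal vectors of order $\geq 2$ via Theorem \ref{main theorem 3}, acknowledging yourself that this only removes a set of Hausdorff dimension $n-3$, which is \emph{not} negligible against $(n-k)$-rectifiability once $k\geq 3$. The paper does not need this reduction at all: it never decomposes into smooth branches, and never distinguishes focal from non-focal vectors in this proof. Second, the ``standard measure-theoretic fact'' you invoke requires a \emph{fixed smooth $(n-k)$-plane field} and then a Lipschitz-graph/cone argument on each piece; but your decomposition of $S$ according to ``which finite collection of smooth branches appears locally'' need not be locally finite nor partition $S$ into closed pieces, and the plane field $D_q$ has no reason to extend smoothly across the stratum. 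The paper avoids all of this by invoking Theorem 3.1 of Alberti--Ambrosio--Cannarsa, a rectifiability criterion that does \emph{not} require the plane field to be smooth or even continuous: it introduces, for each $\delta>0$, the set $\Sigma^k_\delta$ of points where $co\,R_p^\ast$ is $k$-dimensional and contains a $k$-ball of radius $\geq\delta$, extracts a uniformly separated $k$-tuple $\hat X^1,\dots,\hat X^k$ from $R_{p_n}^\ast$ along any convergent sequence in $\Sigma^k_\delta$, and uses the balanced property to conclude every tangent direction to $\Sigma^k_\delta$ at $p$ lies in the $(n-k)$-dimensional subspace $(co\,R_p^\ast)^\perp$. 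The AAC theorem then gives $(n-k)$-rectifiability of each $\Sigma^k_\delta$, and the stratum is the countable union of these over $\delta\to 0$. You should replace your smooth-branch decomposition and Lipschitz-graph machinery with this quantitative stratification and the AAC criterion; the balanced-property computation you already have is then essentially the whole proof.
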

\begin{proof}
Throughout the proof, let $\hat{X}$ be the dual covector to the vector $X\in TM$.

Let $p_{n}$ be a sequence of points such that $co\,(R^{\ast}_{p_{n}})$ contains a $k$-dimensional ball of radius greater than $\delta$. Suppose they converge to a point $p$ and $v_{p_{n}}(p)$ converges to a vector $\eta$.

We take a neighborhood $U$ of $p$ and fix product coordinates in $\pi^{-1}(U)$ of the form $U\times \Rn$.
Then, we extract a subsequence of $p_{n}$ and vectors $X_{n}^{1}\in R_{p_{n}}$ such that $X_{n}^{1}$ converge to a vector $X^{1}$ in $R_{p}$. Outside a ball of radius $c\delta$ at $\hat{X}_{n}^{1}$, where $c$ is a fixed constant and $n>>0$, there must be vectors in $R_{p_{n}}$, and we can extract a subsequence of $p_{n}$ and vectors $X_{n}^{2}$ converging to a vector $X^{2}$ such that $\hat{X}^{2}$ is at a distance at least $c\delta$ of $\hat{X}^{1}$. Iteration of this process yields a converging sequence $p_{n}$ and $k$ vectors 
$$
X_{n}^{1}, .. ,X_{n}^{k}\in R_{p_{n}}
$$
converging to vectors
$$
X^{1}, .. ,X^{k}\in R_{p}
$$
such that the distance between $\hat{X}^{k}$ and the linear span of $\hat{X}^{1},..\hat{X}^{k-1}$ is at least $c\delta$, so that $coV^{\ast}_{p}$ contains a $k$-dimensional ball of radius at least $c'\delta$.

The balanced property implies that the $\hat{X}^{j}$ evaluate to the same value at $\eta$, which is also the maximum value of the $\hat{Z}(\eta)$ for a vector $Z$ in $R_{p}$. In other words, the convex hull of the $\hat{X}^{j}$ belong to the face of $R^{\ast}_{p}$ that is exposed by $\eta$. If $co\,R_{p}^{\ast}$ is $k$-dimensional, $\eta$ belongs to 
$$
\begin{array}{rl}
 \left( co\,R_{p}^{\ast}\right)^{\perp} =&
\left\lbrace 
    v\in T_{p} M \::\quad  \langle w,v\rangle \text{ is constant for }w\in co\,R_{p}^{\ast}
\right\rbrace \\[2ex]
=&
\left\lbrace 
    v\in T_{p} M \::\quad  \langle \hat{X},v\rangle \text{ is constant for }X\in R_{p}
\right\rbrace
\end{array} 
$$
which is a $n-k$ dimensional subspace.

Let $\Sigma^{k}_{\delta}$ be the set of points 
$p\in S$ for which $co\,R_{p}^{\ast}$ is $k$-dimensional and contains a $k$-dimensional ball of radius greater than or equal to $\delta$.
We have shown that all tangent directions to $\Sigma^{k}_{\delta}$ at a point $p$ are contained in a $n-k$ dimensional subspace. We can apply theorem 3.1 in \cite{Alberti Ambrosio Cannarsa} to deduce $\Sigma^{k}_{\delta}$ is $n-k$ rectifiable, so their union for all $\delta>0$ is rectifiable too.

\end{proof}

\section{Appendix: Finsler geometry and exponential maps}
\begin{dfn}\label{dual one form}
The \emph{dual one form} to a vector $V\in T_{p}M$ with respect to a Finsler metric $\varphi $ is the unique one form $\omega \in T^{\ast}_{p}M$ such that $\omega(V)=\varphi(V)^{2}$ and $\omega\vert_{H}=0$, where $H$ is the hyperplane tangent to the level set 
$$
\left\lbrace W \in T_{p}M: \varphi(W) = \varphi(V) \right\rbrace 
$$
at $V$. It coincides with the usual definition of dual one form in Riemannian geometry.

For a vector field, the dual differential one-form is obtained by applying the above construction at every point.
\end{dfn}

\paragraph{Remark.} In coordinates, the dual one form $w$ to the vector $V$ is given by:
$$
w_{j}=\frac{\partial\varphi}{\partial V^{j}}(p,V)
$$

Actually $\varphi$ is 1-homogeneous, so Euler's identity yields:
$$
w_{j}V^{j}=\frac{\partial\varphi}{\partial V^{j}}(p,V)V^{j}=1
$$
and, for a curve $\gamma (-\varepsilon, \varepsilon)\rightarrow T_{p}M$ such that $\gamma(0)=V$, $\varphi(\gamma(t))=\varphi(V)$ and $\gamma'(0)=z$,
$$
w_{j}z^{j}=\frac{\partial}{\partial t}\vert_{t=0}\varphi (\gamma(t))=0
$$

\paragraph{Remark.} The hypothesis on $H$ imply that the orthogonal form to a vector is unique.

\begin{dfn}
The orthogonal hyperplane to a vector is the kernel of its dual one form.
The orthogonal distribution to a vector field is defined pointwise.

There are two unit vectors with a given hyperplane as orthogonal hyperplane. The first need not to be the opposite of the second unless $H$ is symmetric ($H(-v)=H(v)$).
We can define two unit normal vectors to a hypersurface (the \emph{inner} normal and \emph{outer} normal).
\end{dfn}
\subsection{Regular exponential map}\label{subsection: regular exponential map}
The following proposition states some properties of a Finsler exponential map that correspond approximately to the definition of \emph{regular exponential map} introduced in \cite{Warner}:

\begin{prop}\label{regular exponential map}
In the setting \ref{section: setting}
the following holds:
\begin{itemize}
 \item $dF_{x}(r_{x})$ is a non zero vector in $T_{F(x)}M$.
 \item 
at every point $x\in V$ there is a basis 
$$
\mathcal{B}=\left\lbrace v_{1},..,v_{n} \right\rbrace 
$$
of $T_{x}V$ where $r=v_{1}$ and $v_{2},..,v_{k}$ span $\ker dF_{x}$, and such that:
$$\mathcal{B}'=
\left\lbrace 
dF(v_{1}),
\widetilde{d^{2}F(r\sharp v_{2})},\dots 
\widetilde{d^{2}F(r\sharp v_{k})},
dF(v_{k+1}),\dots dF(v_{n})
\right\rbrace 
$$
is a basis of $T_{F(x)}M$, where $\widetilde{d^{2}F(r\sharp v_{2})}$ is a representative of $d^{2}F(r\sharp v_{2})\in T_{F(x)}M/dF(TV_{x}) $.

 \item Any point $x\in V$ has a neighborhood $U$ such that for any ray $\gamma $ (an integral curve of $r$), the sum of the dimensions of the kernels of $dF$ at points in $\gamma \cap U$ is constant.
 \item For any two points $x_{1}\neq x_{2}$ in $V$ with $F(x_{1})=F(x_{2})$, $dF_{x_{1}}(r_{x_{1}})\neq dF_{x_{2}}(r_{x_{2}})$
\end{itemize}
\end{prop}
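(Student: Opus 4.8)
The plan is to reduce each of the four assertions to standard facts about Finsler exponential maps, extracting whatever is needed from the appendix's earlier material and from Warner's work. For the first item, $dF_x(r_x)$ is the velocity of the geodesic $\gamma(t)=F(\Phi(t,\Gamma(x')))$ through $F(x)$; since $r$ is the geodesic vector field, this velocity is a reparametrization-unit tangent to a nonconstant geodesic, hence nonzero. For the fourth item, suppose $x_1\neq x_2$ with $F(x_1)=F(x_2)=p$ but $dF_{x_1}(r_{x_1})=dF_{x_2}(r_{x_2})=:X$. Then two integral curves of $r$ pass through the point $(p,X)$ of the unit tangent bundle (or of $TM$), and by uniqueness of geodesics with prescribed initial condition — equivalently, uniqueness of solutions of the geodesic ODE — these two curves coincide; running them back to $\partial M$ shows $x_1=x_2$, a contradiction. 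Here one must be slightly careful that the geodesic flow is well-defined on $TM\setminus 0$ for a smooth Finsler metric, which is standard (and recalled implicitly in Section \ref{section: Hamilton-Jacobi} via the reduction to the Riemannian case, Corollary \ref{una Riemann con las geodesicas de una Fisnler}).

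For the third item — the local constancy of the total kernel dimension along rays — the key point is that $F$ restricted to $V$ is, up to the harmless extra first coordinate coming from the flow direction $r$, a regular exponential map in the sense of Warner; the statement that along any geodesic the conjugate (focal) points are isolated and that the sum of their multiplicities is locally constant is precisely one of the defining properties of a regular exponential map in \cite{Warner}. I would therefore cite Warner, after checking that our $F$ satisfies his axioms. The one substantive check is that the Finsler geodesic flow defines such a map; here I would again invoke Corollary \ref{una Riemann con las geodesicas de una Fisnler} to replace, near a given geodesic, the Finsler metric by a Riemannian one with the same geodesics, thereby importing Warner's Riemannian-style analysis wholesale. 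Note that the dimension counted is $\dim\ker d_xF$ with $F$ defined on the $n$-dimensional $V$ (which already includes the $r$ direction), so $r\notin\ker d_xF$ by the first item and the count is the genuine focal multiplicity.

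The main work, and the main obstacle, is the second item: the existence at each $x$ of a basis $\mathcal B=\{v_1,\dots,v_n\}$ of $T_xV$ with $v_1=r$, with $v_2,\dots,v_k$ spanning $\ker d_xF$, and such that the images $d_xF(v_1)$, the ``second-order'' vectors $\widetilde{d^2_xF(r\sharp v_i)}$ for $2\le i\le k$, and $d_xF(v_{k+1}),\dots,d_xF(v_n)$ together form a basis of $T_{F(x)}M$. The approach is the classical index-form / Jacobi-field computation adapted to the Finsler setting: after fixing a complement $W$ to $\ker d_xF$ in $T_xV$, the images $d_xF(v_{k+1}),\dots,d_xF(v_n)$ of a basis of $W$ span the $(n-k)$-dimensional subspace $d_xF(T_xV)\subset T_{F(x)}M$, and one must show that the bilinear map $\ker d_xF\times\ker d_xF\to T_{F(x)}M/d_xF(T_xV)$ induced by $d^2_xF$, when restricted to $r\sharp(\cdot)$, is nondegenerate as a map $\ker d_xF\to T_{F(x)}M/d_xF(T_xV)$ — i.e. one can choose $v_2,\dots,v_k$ in $\ker d_xF$ so that $\widetilde{d^2_xF(r\sharp v_2)},\dots,\widetilde{d^2_xF(r\sharp v_k)}$ are linearly independent modulo $d_xF(T_xV)$ and span the $(k-1)$-dimensional quotient. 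This is exactly the Finsler analogue of the fact that, for a focal point of a Riemannian exponential map, the Hessian of the energy / the derivative of the Jacobi fields in the normal direction is nondegenerate on the space of focal Jacobi fields; I would prove it by passing once more to an auxiliary Riemannian metric via Corollary \ref{una Riemann con las geodesicas de una Fisnler} so that $d^2F$ has its usual second-variation interpretation, and then quoting the corresponding property of regular exponential maps from \cite{Warner}. The delicate points to get right are: that $d^2F$ is well-defined on $\ker d_xF$ modulo $d_xF(T_xV)$ independently of extensions of vector fields (this is the standard fact that the Hessian of a map at a critical point of each component descends to such a quotient, noted where $d^2F$ is introduced in Section \ref{subsection: regular exponential map}); and that the quotient $T_{F(x)}M/d_xF(T_xV)$ has dimension exactly $k-1$ rather than $k$, which follows because $d_xF(r)\neq 0$ lies outside nothing problematic — precisely, $\operatorname{rank} d_xF=n-k+1$ since $v_1=r$ is not in the kernel — so the count $1+(k-1)+(n-k)=n$ checks out.
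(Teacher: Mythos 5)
Your handling of items 1 and 4 matches the paper's: item 1 is the observation that $dF_x(r_x)$ is the velocity of a nonconstant geodesic, and item 4 follows from uniqueness of solutions of the geodesic ODE. For items 2 and 3 the paper simply cites Warner's Theorem 4.5 and the accompanying Jacobi field machinery as applying to a Finsler exponential map; your proposal also cites Warner but routes through Corollary \ref{una Riemann con las geodesicas de una Fisnler} first, replacing the Finsler metric by a Riemannian one having the same geodesics along the characteristic vector field. This detour is both unnecessary and does not quite do the job you want it to. Warner's Theorem 4.5 is a statement about $\exp_p\colon T_pM\to M$, the exponential map from a \emph{point}; replacing the Finsler metric by the Riemannian metric of Corollary \ref{una Riemann con las geodesicas de una Fisnler} does not turn $F$ (an exponential map from a hypersurface with a general transversal section $\Gamma$) into any $\exp_p$, so the literal statement of Theorem 4.5 still does not apply. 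What actually makes the properties (R1)--(R3) hold for $F$, as the paper's appendix spells out in its closing remark, is that $d_xF(v)$ is a Jacobi field of the variation $F(\phi_t(x+sv))$ and, when $d_xF(v)=0$, $d^2_xF(v\sharp r)$ is the derivative of that Jacobi field along the geodesic: the second-order ODE structure of the (Finsler) Jacobi equation then gives the nondegeneracy needed for the second item and the local-constancy property in the third directly, with no Riemannian intermediary. Note also a small slip in your item-2 discussion: since $v_1=r\notin\ker d_xF$, the space $d_xF(T_xV)$ has dimension $n-k+1$, not $n-k$ (it contains $d_xF(v_1)$ as well as $d_xF(v_{k+1}),\dots,d_xF(v_n)$); your final count $1+(k-1)+(n-k)=n$ is correct, but the sentence describing the complement $W$ should include $v_1$ or say so explicitly.
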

\paragraph{Proof.} The first three properties follow from the work of Warner \cite[Theorem 4.5]{Warner} for a Finsler exponential map. We emphasize that they are local properties. The last one follows from the uniqueness property for second order ODEs. We remark that the second property implies the last one locally. Combined, they imply that the map $p\rightarrow (F(p),dF_{p}r) $ is an embedding of $V$ into $TM$.

Indeed, properties 1 and 3 are found in standard textbooks (\cite{Milnor}). For the convenience of the reader, we recall some of the notation in \cite{Warner} and show the equivalence of the second property with his condition (R2) on page 577.

\begin{itemize}
 \item A second order tangent vector at $x\in \Rn$ is a map $\sigma$:
$$
\sigma(f)=\sum_{i,j}a_{ij}\frac{\partial^{2} f}{\partial x_{i}\partial x_{j}}
$$

 \item The second order differential of $F:V\rightarrow M$ at $x$ is a map $d^{2}_{x}F:T^{2}_{x}V \rightarrow T^{2}_{x}M $ defined by:
$$
d^{2}_{x}F(\sigma)f=\sigma(f\circ F)
$$
 \item The symmetric product $v\sharp w$ of $v\in T_{x}V$ and $w\in T_{x}V$ is a well defined element of $T^{2}_{x}V/T_{x}V$ 
 with a representative given by the formula:
$$
(v\sharp w)f=\frac{1}{2}(v( w(f))+w (v(f)))
$$
for arbitrary extensions of $v$ to $w$ to vector fields near $x$.
\item The map $d^{2}_{x}F $ induces the map $d^{2}F:T^{2}V_{x}/TV_{x} \rightarrow T^{2}_{F(x)}M/dF(TV_{x})$ by the standard procedure in linear algebra.
\item For $x\in V$, $v\in T_{x}V$ and $w\in \ker dF_{x}$, $d^{2}F(v\sharp w)$ makes sense as a vector in the space $T_{F(x)}M/dF(TV_{x}) $. For any extension of $v$ and $w$, the vector $d^{2}F(v\sharp w)$ is a first order vector.
\end{itemize}

Thus, our condition is equivalent to property (R2) of Warner:

At any point $x$ where $\ker dF_{x}\neq 0$, the map $d^{2}F:T^{2}V_{x}/TV_{x} \rightarrow T^{2}M_{F(x)}/dF(TV_{x})$ sends $\langle r_{x}\rangle \sharp \ker dF_{x}$ isomorphically onto  $T_{F(x)}M/dF(TV_{x})$.

Finally, we recall that $d_{x}F(v)$ is the Jacobi field of the variation $F(\phi_{t}(x+sv))$ at $F(x)$, where $\phi_{t}$ is the flow of $r$ and, whenever $d_{x}F(v)=0$, $d^{2}_{x}F(v\sharp r)$ is represented by the derivative of the Jacobi field along the geodesic.

\end{document}